\title{Topologically $4$-chromatic graphs and signatures of odd cycles}
\author[G. Simons, C. Tardif, D. Wehlau]{Gord Simons, Claude Tardif, 
David Wehlau}\thanks{The third author's research is supported by grants 
from NSERC and ARP}
\address{Royal Military College of Canada \\ PO Box 17000 Stn Forces,
Kingston, ON\\ Canada, K7K 7B4}
\newcommand{\cqfd}{\begin{flushright}\rule{8pt}{9pt}\end{flushright} \par}
\newtheorem{define}{Definition}
\newtheorem{theorem}[define]{Theorem}
\newtheorem{lemma}[define]{Lemma}
\newcommand{\bd}{\begin{define} \rm}
\newcommand{\ed}{\end{define}}
\newcommand{\ind}{\mathrm{ind}}
\newcommand{\coind}{\mathrm{coind}}
\newcommand{\hoco}[1]{\mathrm{Hom}(K_2,#1)}
\newcommand{\boco}[1]{\mathrm{B}(#1)}
\newcommand{\soi}[1]{\mbox{\bf \rm 1}^{<}_{#1}}
\newcommand{\basu}{S}
\newcommand{\boucle}{L}
\newfont{\Bb}{msbm10 scaled\magstep1}
\begin{document}

\begin{abstract}
We investigate group-theoretic ``signatures'' of odd cycles of a graph,
and their connections to topological obstructions to 3-colourability.
In the case of signatures derived from free groups, we prove that the existence
of an odd cycle with trivial signature is equivalent to having the coindex
of the hom-complex at least 2 (which implies that the chromatic number is
at least 4). In the case of signatures derived from elementary abelian 2-groups
we prove that the existence of an odd cycle with trivial signature 
%
is a sufficient condition for having the index of the hom-complex at least 2
(which again implies that the 
chromatic number is at least 4).
\end{abstract} 

\maketitle
{\small \noindent{\bf Keywords:}
Graph colourings, homomorphisms, Hom complexes, free groups
\newline \noindent {\em AMS 2010 Subject Classification:}
05C15.}
\smallskip

\section{Introduction} This paper is motivated by so-called
``topological bounds'' on the chromatic number of a graph:
\begin{equation}\label{topbounds}
\chi(H) \geq \ind(\hoco{H}) + 2 \geq \coind(\hoco{H}) + 2.
\end{equation}
Here, $\hoco{H}$ is a ``hom-complex'' which can be viewed both as a 
$\mathbb{Z}_2$-poset and as the geometric realisation of its order complex. 
Its index $\ind(\hoco{H})$ and coindex $\coind(\hoco{H})$ will defined 
in the next section.

The bounds (\ref{topbounds}) have been useful in determining chromatic numbers
for various classes of graphs. However for general graphs, the index
and coindex are not known to be computable. In contrast, the chromatic
number is in NP. The computational aspects of such topological invariants
are now being investigated (see \cite{CKMSVW,CKV}). In this paper 
we focus on the case when the bounds give a chromatic number
of at least $4$. We present an algebraic approach.

For a graph $H$, let $A(H)$ denote the set of its arcs. That is,
for $[u,v] \in E(H)$, $A(H)$ contains the two arcs $(u,v)$ and $(v,u)$.
Let $\mathcal{V}$ be a variety of groups (in the sense of universal algebra:
a class of groups defined by a set of identities). 
Let $\mathcal{F}_{\mathcal{V}}(A(H))$ 
be the free group in $\mathcal{V}$ generated by the elements of $A(H)$.  
We define the congruence $\theta$ on $\mathcal{F}_{\mathcal{V}}(A(H))$  
by the relations
$$
\begin{array}{rcl}
(a,b) (c,b)^{-1} (c,d) (a,d)^{-1} & \theta & 1
\end{array}
$$
for all $4$-cycles $a, b, c, d$ of $H$.
The group $\mathcal{G}_{\mathcal{V}}(H)$ is defined as the
quotient $\mathcal{F}_{\mathcal{V}}(A(H)) / \theta$.
Let $C_{n}$ denote the cycle with vertex-set 
$\mathbb{Z}_{n} = \{0, \ldots, n-1\}$ and edges $[i,i+1]$, $i \in \mathbb{Z}_{n}$.
If $n$ is odd and $f: C_n \rightarrow H$ is a homomorphism, we define the 
{\em $\mathcal{V}$-signature} $\sigma_{\mathcal{V}}(f)$ of $f$ as
$$
\sigma_{\mathcal{V}}(f) 
= \prod_{i = 0}^{n-1} (f(2i),f(2i+1)) \cdot (f(2i+2),f(2i+1))^{-1},
$$
where the indices are taken modulo $2n+1$ and the product
is developed left to right: $\prod_{i = 0}^{k}x_i = x_0 x_1 \cdots x_k$ 
rather than $x_k x_{k-1} \cdots x_0$.

We will consider two varieties of groups and related signatures:
the variety $\mathcal{V}_1$ of all groups and the variety
$\mathcal{V}_2$ of elementary abelian 2-groups (where we use additive notation).
We let $\sigma_1(f)$ and $\sigma_2(f)$ denote respectively
$\sigma_{\mathcal{V}_1}(f)$ and $\sigma_{\mathcal{V}_2}(f)$.
We prove the following results.
\begin{theorem} \label{coindth}
Let $H$ be a graph. Then $\coind(\hoco{H}) \geq 2$ if and only if for some odd $n$,
there exists a homomorphism $f: C_n \rightarrow H$ such that $\sigma_1(f) = 1$.
\end{theorem}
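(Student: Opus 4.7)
The plan is to identify both sides of the equivalence with a common intermediate condition: the existence of a $\mathbb{Z}_2$-equivariant loop $g\colon S^1\to\hoco{H}$ that is null-homotopic as an ordinary (non-equivariant) map. The key general fact I would invoke is that, for any free $\mathbb{Z}_2$-complex $X$, $\coind(X)\ge 2$ holds iff some equivariant $S^1\to X$ extends to a disk $D^2\to X$: an ordinary extension can be glued to its antipodal copy along the boundary to yield a $\mathbb{Z}_2$-equivariant map $S^2\to X$, and conversely the restriction of an equivariant $S^2\to X$ to an equator provides such a bounded loop.

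To connect this to signatures I would use the polyhedral structure of $\hoco{H}$: its $0$-cells are the arcs of $H$; its $1$-cells connect pairs of arcs that share one endpoint (arising from $K_{1,2}$ and $K_{2,1}$ sub-configurations of $H$); and each $4$-cycle $a,b,c,d$ of $H$ contributes a square $2$-cell whose boundary walk is labelled by $(a,b)(c,b)^{-1}(c,d)(a,d)^{-1}$, matching verbatim the defining relator of $\mathcal{G}_{\mathcal{V}_1}(H)$. For odd $f\colon C_n\to H$, the arcs
\[ a_{2k}=(f(2k),f(2k+1)),\qquad a_{2k+1}=(f(2k+2),f(2k+1)),\qquad k=0,\dots,n-1, \]
form a closed walk of length $2n$ in this $1$-skeleton, and the oddness of $n$ forces $a_{i+n}$ to be the reverse of $a_i$, so shifting by $n$ realises the antipodal action and the walk is a $\mathbb{Z}_2$-equivariant loop $g_f\colon S^1\to\hoco{H}$. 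Crucially, the signature word $\sigma_1(f)=a_0a_1^{-1}a_2a_3^{-1}\cdots a_{2n-2}a_{2n-1}^{-1}$ is precisely the alternating label read off from this walk.

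For $(\Leftarrow)$, if $\sigma_1(f)=1$ in $\mathcal{G}_{\mathcal{V}_1}(H)$, any factorisation of $\sigma_1(f)$ as a product of conjugates of $4$-cycle relators packages (as a Van Kampen diagram) into an explicit cellular disk $D^2\to\hoco{H}$ built from square $2$-cells and bounded by $g_f$; doubling then yields $\coind(\hoco{H})\ge 2$.

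For $(\Rightarrow)$, I would start with an equivariant $\Phi\colon S^2\to\hoco{H}$, restrict to an equator to obtain a null-homotopic equivariant $g\colon S^1\to\hoco{H}$, and apply equivariant simplicial approximation into the $1$-skeleton to replace $g$ by a walk of the form $g_f$ for some odd $f\colon C_n\to H$; cellular approximation of the bounding disk then expresses $\sigma_1(f)$ as a product of $2$-cell boundary words of $\hoco{H}$. The main obstacle is that $\hoco{H}$ also carries triangular $2$-cells coming from $K_{1,3}$ and $K_{3,1}$ subgraphs of $H$, whose boundaries are \emph{not} among the $4$-cycle relators defining $\mathcal{G}_{\mathcal{V}_1}(H)$; these must be eliminated, presumably by detouring the walk through appropriate common neighbours so that each triangular contribution is absorbed into a sequence of $4$-cycle squares, possibly after replacing $f$ by a longer odd-cycle homomorphism to create room. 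Carrying out this triangle-to-square reduction without introducing new algebraic obstructions is the crux of the argument.
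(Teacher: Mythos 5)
Your high-level dictionary is sound and is in fact the topological picture that underlies the paper's argument: $\coind(\hoco{H})\ge 2$ is equivalent to the existence of an equivariant circle in $\hoco{H}$ bounding a disk (your doubling trick is fine), and the equivariant closed walk $g_f$ you attach to an odd $f:C_n\to H$ is the right object, with the shift by $n$ realising the involution. But the paper does not argue this way: its forward direction is a short computation in $\mathcal{G}_{\mathcal{V}_1}(H)$ applied level by level to a generalised Mycielskian $M_q(C_n)\to H$ supplied by Lemma~3, and its reverse direction is a long combinatorial construction of such a homomorphism from the factorisation of $\sigma_1(f)$. The places where your route diverges from this are exactly where the gaps are.

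The first gap is in $(\Leftarrow)$. A van Kampen diagram for $\sigma_1(f)=\prod_i\gamma_i\rho_i\gamma_i^{-1}$ lives over the presentation complex of $\langle A(H)\mid \rho\rangle$, where arcs label \emph{edges}; in $\hoco{H}$ the arcs are \emph{vertices}, and two arcs span a $1$-cell only when they share an endpoint in $H$. In an arbitrary conjugate $\gamma_i\rho_i\gamma_i^{-1}$ consecutive letters need not share an endpoint, so the diagram has no cellular image in $\hoco{H}$; the correspondence you need is not the standard van Kampen one. Repairing this --- replacing each letter by a rooted closed walk so that the whole word becomes an alternating walk, and then showing the resulting, much longer, equivariant cycle still bounds --- is precisely the content of the paper's loops $\boucle(u,v)=\omega(p_e(u))\cdot(u,v)\cdot\omega(p_o(v))^{-1}$ and of the entire brick-extension machinery (extension to the apex, towards the base, and connection to the base). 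That is the bulk of the proof, and your proposal is silent on it. The second gap is the one you flag yourself in $(\Rightarrow)$: a cellular disk in $\hoco{H}$ may use the triangular cells $(\{a\},B)$ with $|B|=3$, whose boundaries are $3$-cycles in the $1$-skeleton and so do not even fit the alternating forward-arc/backward-arc format of the relators; ``presumably by detouring'' is not an argument. The paper avoids this entirely by never touching the cell structure of $\hoco{H}$ in that direction: it invokes the characterisation of coindex by generalised Mycielski graphs and computes $\sigma_1$ of the base cycle algebraically. Note also that this is the \emph{easy}, already-published direction in the paper, whereas your plan makes it the hard one.
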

\begin{theorem} \label{indth}
Let $H$ be a graph. If for some odd $n$ there exists a homomorphism 
$f: C_n \rightarrow H$ such that $\sigma_2(f) = 0$, then $\ind(\hoco{H}) \geq 2$.
\end{theorem}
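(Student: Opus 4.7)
The plan is to prove the contrapositive: assume $\ind(\hoco{H}) \leq 1$, i.e., that a $\mathbb{Z}_2$-equivariant continuous map $\phi \colon \hoco{H} \to S^1$ exists (where $S^1$ carries the antipodal $\mathbb{Z}_2$-action), and then show that $\sigma_2(f) \neq 0$ in $\mathcal{G}_{\mathcal{V}_2}(H)$ for every odd $n$ and every homomorphism $f \colon C_n \to H$. The key step is to use $\phi$ to produce a group homomorphism $\pi \colon \mathcal{G}_{\mathcal{V}_2}(H) \to \mathbb{Z}_2$ whose restriction to the generators satisfies the ``sign condition'' $\pi(u, v) + \pi(v, u) = 1$ for every arc $(u, v) \in A(H)$. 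I would approach this by first approximating $\phi$ as an equivariant simplicial map into a cyclic model $C_{2m}$ of $S^1$ with $m$ odd (antipodal action $i \mapsto i + m$), and then setting $\pi(u, v) := \phi(u, v) \bmod 2$. The equivariance of $\phi$ combined with the oddness of $m$ yields the sign condition; to show that $\pi$ descends from the free elementary abelian $2$-group on arcs to the quotient $\mathcal{G}_{\mathcal{V}_2}(H)$, one must verify that for every $4$-cycle $a, b, c, d$ of $H$ the relation $\pi(a, b) + \pi(c, b) + \pi(c, d) + \pi(a, d) \equiv 0 \pmod 2$ holds. This would be derived from the topological fact that the $2$-cell of $\hoco{H}$ corresponding to the $4$-cycle is a disk, so that $\phi$ maps it to a null-homotopic region in $C_{2m}$, giving a parity constraint on the four corner values.

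Once such a $\pi$ is in hand, the argument concludes quickly. Expanding $\pi(\sigma_2(f)) = \sum_{i=0}^{n-1} \bigl[\pi(f(2i), f(2i+1)) + \pi(f(2i+2), f(2i+1))\bigr]$ and substituting $\pi(f(2i), f(2i+1)) = 1 + \pi(f(2i+1), f(2i))$ in each term, the calculation reduces to $n$ plus two sums of the form $\sum_{j \in \mathbb{Z}_n} \pi(f(j+1), f(j))$; these two sums are identical because $\gcd(2, n) = 1$ makes each of the substitutions $j = 2i$ and $j = 2i+1$ a bijection of $\mathbb{Z}_n$, so they cancel modulo $2$. Thus $\pi(\sigma_2(f)) \equiv n \equiv 1 \pmod 2$. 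If $\sigma_2(f) = 0$ in $\mathcal{G}_{\mathcal{V}_2}(H)$, then since $\pi$ is a homomorphism on the quotient we would have $\pi(\sigma_2(f)) = 0$, contradicting the calculation.

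The hard part is the construction of $\pi$ from $\phi$, specifically the verification of the $4$-cycle relations. Translating ``the boundary of each $2$-cell has zero winding in $C_{2m}$'' into the mod-$2$ parity relation on the four corner $\pi$-values is where the topological input meets the algebra, and some care is needed with the simplicial approximation (to ensure $m$ can be taken odd, and that $C_{2m}$ is fine enough that the integer winding constraints reduce cleanly). Once this obstacle is handled, the algebraic computation finishing the proof is essentially the one sketched above.
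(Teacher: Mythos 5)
Your overall architecture is sound: by $\mathbb{Z}_2$-linear duality, the non-existence of an odd closed walk with $\sigma_2 = 0$ is indeed equivalent to the existence of a functional $\pi \colon A(H) \to \mathbb{Z}_2$ that vanishes on the $4$-cycle relations and satisfies $\pi(u,v)+\pi(v,u)=1$ on every arc, and your closing computation $\pi(\sigma_2(f)) = n \equiv 1 \pmod 2$ is correct. But this equivalence cuts both ways: constructing such a $\pi$ \emph{is} the theorem, not a reduction of it, and the one step you defer --- verifying the relation $\pi(a,b)+\pi(c,b)+\pi(c,d)+\pi(a,d)=0$ --- is where the proof fails. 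The deduction you propose does not go through. After equivariant simplicial approximation into $C_{2m}$, the contractible subposet of $\hoco{H}$ spanned by the four arcs of a $4$-cycle (the poset $D(\rho^+)$ of the paper) has as boundary an $8$-cycle alternating the four minimal elements with four intermediate elements; null-homotopy of its image forces the \emph{sum of the edge increments} around this loop to vanish, which places no constraint whatsoever on the parity of the sum of the four \emph{vertex values} at the corners. Concretely, corner values $0,1,0,0$ (increments $0,+1,0,-1,0,0,0,0$) give winding zero, extend over the cone, yet have odd corner sum. Worse, one can perturb a valid equivariant map at a single minimal element $(\{a\},\{b\})$ and its antipode $(\{b\},\{a\})$ by $\pm 1$ without disturbing anything else; this flips the parity sum of every relation containing the arc $(a,b)$, so no rule assigning $\pi(u,v)$ as a function of $\phi$ at the vertex $(\{u\},\{v\})$ alone can satisfy the relations for all equivariant $\phi$.

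This is precisely the difficulty the paper's machinery is built to circumvent. Rather than a single bit per arc, the paper attaches to each arc the image under $\hat{g}$ of the strict order indicator of the fence $(N_H(v),\{v\}) > (\{u\},\{v\}) < (\{u\},N_H(u))$, an element of $\mathbb{Z}_2^{Q_1^{\underline{2}}}$ recording \emph{which} strict comparabilities of $Q_1$ are hit; the relations are killed because $D(\rho^+)$ is dismantlable (and stays so under barycentric subdivision, Lemma~\ref{disbasu}), and the contradiction comes from the odd-degree statement that a $\mathbb{Z}_2$-map of a crown to $Q_1$ hits all four strict comparabilities an odd number of times (Lemma~\ref{odddegree}). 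Note that this invariant cannot be collapsed to a single parity either, since $\soi{Q_1}$ has four nonzero coordinates and hence total weight $0 \bmod 2$. To repair your argument you would need an invariant of $\phi$ near each arc that is provably constant on the contractible pieces $\basu^m(D(\rho^+))$ while still detecting the antipodality globally; the winding-number-of-the-boundary idea supplies neither property in the form you use it.
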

We will show that the existence of an odd $n$ and a homomorphism 
$f: C_n \rightarrow H$ such that $\sigma_2(f) = 0$ can be
decided in polynomial time. Therefore if the converse of 
Theorem~\ref{indth} holds, then the question as to whether
a graph $H$ satisfies $\ind(\hoco{H}) \geq 2$ can be decided in
polynomial time. In contrast, Theorem~\ref{coindth} provides a necessary and 
sufficient algebraic condition for a graph $H$ to satisfy  $\coind(\hoco{H}) \geq 2$,
but it is not clear whether this condition can be decided at all, let alone in 
polynomial time. In fact, as pointed out by Zimmerman~\cite{zimmerman}, it is not clear
whether the word problem in $\mathcal{G}_{\mathcal{V}_1}(H)$
is always decidable.

\section{Preliminaries}
In this section we introduce the terminology necessary
to define the index and coindex of hom-complexes, and
characterise them in ways that will allow us to prove
Theorems~\ref{coindth} and \ref{indth}.

\subsection{Topology}
The {\em hom-complex $\hoco{H}$ of $H$} is the set with elements
$(A,B)$ such that $A, B$ are nonempty subsets of $H$ and every element
of $A$ is joined by an edge of $H$ to every element of $B$. Here, $K_2$
denotes the complete graph on two vertices $0$ and $1$; the name
``hom''-complex is derived from the fact that if $(A,B) \in \hoco{H}$,
then for any $a \in A$ and $b \in B$, there is a homomorphism $f$ of
$K_2$ to $H$ defined by $f(0) = a$ and $f(1) = b$.

We view $\hoco{H}$ primarily as a $\mathbb{Z}_2$-poset, that is a poset
with a fixed-point free automorphism of order two (denoted $-$).
The order relation on $\hoco{H}$ is coordinatewise inclusion, 
and the $\mathbb{Z}_2$ involution is given by $-(A,B) = (B,A)$.
A {\em $\mathbb{Z}_2$-map} between $\mathbb{Z}_2$-posets $P$ and $Q$ 
is an order-preserving map $f: P \rightarrow Q$ such that
$f(-x) = -f(x)$.

Any poset $P$ can also be viewed as a simplicial complex, by viewing
chains as simplices. The {\em geometric realization} of $P$
is the topological space $\Delta P \subseteq \mathbb{R}^P$
induced by the functions $f: P \rightarrow [0,1]$ whose support
(i.e., set of elements with nonzero image) is a chain in $P$,
and which satisfy $\sum \{ f(x) : x \in P \} = 1$. 
If $P$ is a $\mathbb{Z}_2$-poset, then $\Delta P$ is a
$\mathbb{Z}_2$-space, that is, a topological space with a
fixed-point free homeomorphism of order 2. A {\em $\mathbb{Z}_2$-map} 
between $\mathbb{Z}_2$-spaces $X$ and $Y$ 
is a continous map $f: X \rightarrow Y$ such that
$f(-x) = -f(x)$.

The index $\ind(X)$ and coindex $\coind(X)$ of a 
$\mathbb{Z}_2$-space $X$ are defined in terms
of the unit sphere $S_n \subseteq \mathbb{R}^{n+1}$, 
viewed as a $\mathbb{Z}_2$-space:
\begin{itemize}
\item $\ind(X)$ is the smallest $n$ such that $X$ admits a
$\mathbb{Z}_2$-map to $S_n$;
\item $\coind(X)$ is the largest $n$ such that $S_n$ admits a
$\mathbb{Z}_2$-map to $X$.
\end{itemize}
The fact that $\ind(S_n) = \coind(S_n) = n$ is not trivial, but is 
a restatement of the Borsuk-Ulam theorem.

For a $\mathbb{Z}_2$-poset $P$, we write $\ind(P)$ and $\coind(P)$ respectively
for $\ind(\Delta P)$ and $\coind(\Delta P)$. 
Now consider the $(2n+2)$-element $\mathbb{Z}_2$-poset $Q_n$, with elements 
$\{ \pm 0, \allowbreak \ldots, \allowbreak \pm n\}$ ordered by the relation 
$\{+i, -i\} < \{+j, -j\}$ (in $Q_n$) when $i < j$ (in $\mathbb{N}$).
Then $Q_n$ is the face-poset of the cross-polytope of dimension $n$,
and therefore $\Delta Q_n$ is $\mathbb{Z}_2$-homeomorphic to $S_n$. This
correspondence can be used to characterise the index and the coindex
of a $\mathbb{Z}_2$-poset in terms of order-preserving $\mathbb{Z}_2$-maps.

The {\em barycentric subdivision} of a poset $P$ is the poset $\basu(P)$
whose elements are the chains of $P$, ordered by inclusion.
Note that when $P$ is a $\mathbb{Z}_2$-poset, $\basu(P)$ is also
a $\mathbb{Z}_2$-poset. The exponential notation is used to denote
iterated barycentric subdivisions. By simplicial approximation, the
following holds for any $\mathbb{Z}_2$-poset $P$:
\begin{itemize}
\item $\ind(P)$ is the smallest $n$ such that for some $m$, $\basu^m(P)$ 
admits a $\mathbb{Z}_2$-map to $Q_n$;
\item $\coind(P)$ is the largest $n$ such that for some $m$, $\basu^m(Q_n)$ 
admits a $\mathbb{Z}_2$-map to $P$.
\end{itemize}

\subsection{Graph theory}
For a graph $H$, the characterisation of $\ind(\hoco{H})$ given just above
will be sufficient to prove Theorem~\ref{indth} in Section~\ref{prindth} below.
For a proof of Theorem~\ref{coindth}, we rely on a further characterisation
of $\coind(\hoco{H})$ in terms of graph homomorphisms.

The {\em categorical product} of two graphs $G$ and $G'$ 
is the graph $G\times G'$ defined by 
\begin{eqnarray*}
V(G\times G') & = & V(G) \times V(G'), \\
E(G \times G') & = & \{ [(u,u'),(v,v')] : [u,v]  \in E(G) \mbox{ and } [u',v'] \in E(G') \}.
\end{eqnarray*}
For $q \in \mbox{\Bb N}^*$, let $\mbox{\Bb P}_q$ denote the path with vertices 
$0, 1, \ldots, q$ linked consecutively, with a loop at $0$. 
For a graph $G$, the $q$-th cone $M_q(G)$
(or $q$-th generalised Mycielskian) over $G$ is the graph 
$(G \times \mbox{\Bb P}_q)/ \sim_q$,
where $\sim_q$ is the equivalence which identifies all vertices whose second coordinate is $q$.
The vertex $(V(G) \times \{q\})/ \sim_q$ is called the {\em apex} of $M_q(G)$,
while $V(G) \times \{0\}$ is the {\em base} of $M_q(G)$. Any set
$V(G) \times \{i\}$ is called a {\em level} of $M_q(G)$.

\begin{figure}[htp]
\centering
\includegraphics{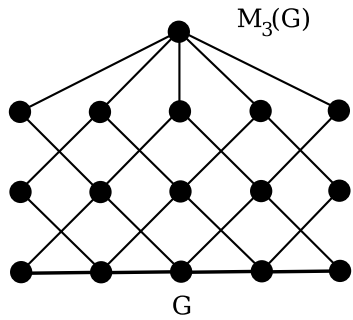}
\caption{Generalised Mycielskian}  
\label{mycielskian}
\end{figure}
 
The cone construction allows us to define classes of ``generalised Mycielski graphs''
inductively: Let $\mathcal{K}_2 = \{ K_2 \}$, and for $k \geq 3$, put
$$
\mathcal{K}_k = \{ M_q(G) : G \in \mathcal{M}_{k-1}, q \in \mbox{\Bb N}^*\}.
$$
\begin{lemma}[\cite{STW}] \label{coind=mycielski}
For any graph $H$, $\coind(\hoco{H})$ is the largest $k$ such that there exist
a $G \in \mathcal{K}_{k+2}$ admitting a homomorphism to $H$. 
\end{lemma}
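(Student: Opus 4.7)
The lemma decouples into two inequalities, which I would prove separately.

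\emph{Sufficiency} ($G \in \mathcal{K}_{k+2}$ with $G \to H$ implies $\coind(\hoco{H}) \geq k$). Since the hom-complex assignment is functorial on graph homomorphisms and commutes with the $\mathbb{Z}_2$-involution, any homomorphism $G \to H$ induces a $\mathbb{Z}_2$-map $\hoco{G} \to \hoco{H}$; so it is enough to show $\coind(\hoco{G}) \geq k$ for every $G \in \mathcal{K}_{k+2}$. I would argue by induction on $k$. The base case $G = K_2$ is immediate, since $\Delta\hoco{K_2}$ is the two-point $\mathbb{Z}_2$-space $S_0$, giving $\coind = 0$. For the inductive step the key inequality is
$$\coind(\hoco{M_q(G)}) \geq \coind(\hoco{G}) + 1 \qquad \text{for every } q \geq 1.$$
The idea is that $M_q(G)$ realises a $\mathbb{Z}_2$-suspension of $\hoco{G}$ at the level of hom-complexes: the base copy of $G$ contributes ``equatorial'' cells on which a given $\mathbb{Z}_2$-map $\basu^m(Q_{k-1}) \to \hoco{G}$ can be reused, while the apex (together with its antipode, realised via the $\mathbb{Z}_2$-action on a symmetric pair of cells drawn from the intermediate levels) supplies the two new poles. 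Routing the extension through the $q$ intermediate levels yields a $\mathbb{Z}_2$-map from a barycentric subdivision of $Q_k$ into $\hoco{M_q(G)}$.

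\emph{Necessity} ($\coind(\hoco{H}) \geq k$ implies some $G \in \mathcal{K}_{k+2}$ maps to $H$). By the characterisation from the preliminaries, we have a $\mathbb{Z}_2$-map $\varphi : \basu^m(Q_k) \to \hoco{H}$. I would extract a graph $G_k \in \mathcal{K}_{k+2}$ together with a homomorphism $G_k \to H$ directly from $\varphi$ by identifying (a subdivision of) $Q_k$ with $\hoco{G_k}$ for an iterated Mycielskian $G_k = M_{q_1}(M_{q_2}(\cdots M_{q_k}(K_2)\cdots))$ whose parameters $q_i$ are tuned to $m$. Given such a recognition, a homomorphism $G_k \to H$ is read off from $\varphi$ by selecting, at each vertex of $G_k$ (a singleton chain of $\basu^m(Q_k)$), a representative from the appropriate coordinate of the cell $\varphi(\,\cdot\,) \in \hoco{H}$; the edge relations in $G_k$ are exactly the adjacency relations witnessed by the pairs in $\hoco{H}$, so this assignment is a graph homomorphism.

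\emph{Main obstacle.} Both directions pivot on the same structural fact: that $M_q$ implements a $\mathbb{Z}_2$-suspension of hom-complexes, at least up to coindex. The forward direction requires an explicit construction of a subdivided-cross-polytope map threading equator to poles through the level structure of $M_q(G)$, and the converse requires running that recognition in reverse, pulling an iterated Mycielskian tower out of $\basu^m(Q_k)$. The delicate combinatorial bookkeeping—choosing $q_1, \ldots, q_k$ as functions of $m$ so that the barycentric subdivisions on both sides line up and the $\mathbb{Z}_2$-actions are preserved throughout—is where I expect the bulk of the technical work to lie, and is presumably the reason the proof is deferred to \cite{STW}.
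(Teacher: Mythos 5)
First, a point of reference: the paper does not prove this lemma at all. It is imported verbatim from \cite{STW} (with the box complex $\boco{H}$ used there replaced by $\hoco{H}$ via Csorba's $\mathbb{Z}_2$-homotopy equivalence), so there is no in-paper argument to compare yours against; I can only assess your sketch on its own terms. Your sufficiency half is sound in outline: functoriality of $\mathrm{Hom}(K_2,-)$ reduces the claim to $\coind(\hoco{G}) \geq k$ for every $G \in \mathcal{K}_{k+2}$, the base case $\hoco{K_2} \cong S_0$ is correct, and the inductive step $\coind(\hoco{M_q(G)}) \geq \coind(\hoco{G}) + 1$ is precisely Csorba's suspension theorem for generalised Mycielskians (or the weaker one-sided inequality, which can be proved directly). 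That half is an acceptable reduction to a citable fact, even though the ``routing through the intermediate levels'' is only gestured at.

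The necessity half has a genuine gap, and it sits exactly where you defer the work. A $\mathbb{Z}_2$-poset map $\varphi: \basu^m(Q_k) \rightarrow \hoco{H}$ cannot in general be converted into a graph homomorphism by ``identifying a subdivision of $Q_k$ with $\hoco{G_k}$'' and ``selecting a representative from the appropriate coordinate of $\varphi(\cdot)$''. Two distinct problems. First, the hom-complex of an iterated Mycielskian is only $\mathbb{Z}_2$-\emph{homotopy equivalent} to a sphere; it is not poset-isomorphic to any barycentric subdivision of $Q_k$, and no tuning of $q_1,\ldots,q_k$ makes it one, so the ``recognition'' you postulate does not exist. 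Second, and more fundamentally, even a genuine $\mathbb{Z}_2$-poset map $\hoco{G} \rightarrow \hoco{H}$ does not yield a graph homomorphism $G \rightarrow H$ by choosing representatives: a vertex $u$ of $G$ lies in many cells, $\varphi$ assigns them different pairs $(A,B)$, and the single value $h(u)$ would have to lie in the first coordinate of $\varphi(\{u\},\{v\})$ simultaneously for every neighbour $v$, which can fail. A concrete obstruction: $\basu^m(\hoco{K_3})$ and $\hoco{C_5}$ are both $\mathbb{Z}_2$-crowns realising $S_1$, so for suitable $m$ there is a $\mathbb{Z}_2$-poset map $\basu^m(\hoco{K_3}) \rightarrow \hoco{C_5}$, yet $K_3$ admits no homomorphism to $C_5$. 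Extracting a graph homomorphism from topological data is the actual content of \cite{STW} (and, in the parallel situation of Theorem~\ref{coindth}, the reason this paper's Section~\ref{prcoindth} is so long): the Mycielskian tower and the homomorphism must be built together, level by level, rather than read off a simplicial approximation.
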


The complex used in~\cite{STW} was the box complex $\boco{H}$ rather than the hom-complex
$\hoco{H}$. However the two complexes are $\mathbb{Z}_2$-homotopy equivalent
by a result of Csorba~\cite{csorba}.

\section{Proof of Theorem \ref{coindth}}  \label{prcoindth}

\subsection{Overview of the proof} One implication of
Theorem \ref{coindth} has been proved in~\cite{STW}:

\begin{lemma}[Proposition 5 of ~\cite{STW}]
If $\coind(\hoco{H}) \geq 2$, then there exists an odd cycle $C_n$
and a homomorphism $f: C_n \rightarrow H$ such that $\sigma_1(f) = 1$.
\end{lemma}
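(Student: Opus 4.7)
The plan is to reduce the statement to a combinatorial claim via Lemma~\ref{coind=mycielski}, and then to exploit the layered structure of generalised Mycielski graphs. The hypothesis $\coind(\hoco{H}) \geq 2$ yields a graph $G \in \mathcal{K}_4$ and a homomorphism $h : G \to H$. A direct inspection of the cone construction gives $M_{q'}(K_2) \cong C_{2q'+1}$, so the class $\mathcal{K}_3$ coincides with the family of odd cycles of length at least three, and every $G \in \mathcal{K}_4$ therefore has the form $M_q(C_n)$ for some odd $n \geq 3$ and some $q \geq 1$. The task reduces to producing, from such a homomorphism $h : M_q(C_n) \to H$, an odd cycle $C_m$ and a homomorphism $f : C_m \to H$ with $\sigma_1(f) = 1$.

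My strategy is to push the base odd cycle of $M_q(C_n)$ up through the $q$ levels of the cone. Starting with the base cycle $W_0 : C_n \hookrightarrow M_q(C_n)$, I would define, for each $0 \leq i \leq q$, an odd closed walk $W_i$ in $M_q(C_n)$ that lives on the top $q - i$ levels; the final walk $W_q$ collapses to the constant walk at the apex. Consecutive walks $W_i$ and $W_{i+1}$ should differ only by local moves across the 4-cycles of $M_q(C_n)$ available between levels $i$, $i+1$, and $i+2$, such as the ``bowtie'' 4-cycles $(u, i), (v, i{+}1), (u, i{+}2), (w, i{+}1)$ determined by the two neighbours $v, w$ of a vertex $u$ in $C_n$. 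Each such 4-cycle maps under $h$ to a 4-cycle of $H$ (possibly degenerate), and so yields a defining relator of $\theta$ in $\mathcal{G}_{\mathcal{V}_1}(H)$. The key step is to verify that $\sigma_1(h \circ W_i)$ and $\sigma_1(h \circ W_{i+1})$ differ by a product of such relators in $\mathcal{G}_{\mathcal{V}_1}(H)$; iterating from $i = 0$ to $i = q$ then gives $\sigma_1(h \circ W_0) = \sigma_1(h \circ W_q) = 1$, providing the desired odd cycle with trivial signature.

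The main obstacle is the bookkeeping needed to align the even/odd indexing convention of $\sigma_1$ (which singles out the positions $f(2i)$) with the zigzag level pattern of the walks $W_i$, and to correctly handle degenerate 4-cycles arising when $h$ is not injective. Both difficulties can be managed by judicious choice of the starting vertex and orientation of each $W_i$, together with the observation that a degenerate 4-cycle relation reduces to a trivially valid word in the free group on arcs of $H$.
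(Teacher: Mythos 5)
Your reduction to a homomorphism $h: M_q(C_n) \rightarrow H$ via Lemma~\ref{coind=mycielski} (including the identification $M_{q'}(K_2) \cong C_{2q'+1}$) is exactly how the paper begins, and the idea of propagating the signature between base and apex using images of $4$-cycles of $M_q(C_n)$ is also the paper's idea. But the induction you propose cannot be set up: there are no odd closed walks of $M_q(C_n)$ confined to the levels above the base. Every edge of $M_q(C_n)$ not lying inside level $0$ joins consecutive levels (the loop of $\mbox{\Bb P}_q$ sits only at $0$), so the subgraph induced on levels $1, \ldots, q$ is bipartite, coloured by level parity; in particular your walks $W_i$ for $i \geq 1$ do not exist, and $W_q$ is not a walk at all since the apex carries no loop. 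The same parity obstruction already blocks the first move: to apply a bowtie exchange at a vertex of the base cycle you need its two walk-neighbours to sit at level $1$, i.e.\ an alternating $2$-colouring of the traversal, which an odd cycle traversed once does not admit.

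The paper circumvents this with a double-traversal device that is in fact built into the definition of $\sigma_1$: the product $\prod_{i=0}^{n-1}(f(2i),f(2i+1))(f(2i+2),f(2i+1))^{-1}$ runs around $C_n$ twice, hence is the word of an \emph{even} closed walk of length $2n$. For each pair of consecutive levels the paper forms the word $L_jR_j = \prod_{i=0}^{n-1}(u_{2i,j},u_{2i+1,j+1})(u_{2i+2,j},u_{2i+1,j+1})^{-1}$ of the length-$2n$ zigzag between levels $j$ and $j+1$. The $4$-cycle relations give $\sigma_1(f) = L_0R_0$ and exhibit $L_jR_j$ as the image of $R_{j-1}L_{j-1}$ under the automorphism of $\mathcal{G}_{\mathcal{V}_1}(H)$ inverting each generator; since $L_{q-1}R_{q-1}$ telescopes to $1$ at the apex, a downward induction (using that $L_{j-1}R_{j-1}$ and $R_{j-1}L_{j-1}$ are conjugate) yields $\sigma_1(f) = 1$. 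If you replace your odd walks by these even zigzag words and track the inversion and the conjugation, your outline becomes the paper's proof; as written, the missing double-traversal idea is a genuine gap rather than bookkeeping.
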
 

\begin{proof} For reference it is worthwhile to give a sketch of 
the proof here. In view of Lemma~\ref{coind=mycielski}, our
hypothesis implies the existence of a homomorphism 
$g: M_q(C_n) \rightarrow H$. We will write $u_{i,j}$ for $g((i,j)/ \sim_q)$. 
(Note that $(i,j)/ \sim_q = \{(i,j)\}$ except when $j = q$; $u_{i,q}$ 
is the image of the apex of  $M_q(C_n)$ for any $i \in \mathbb{Z}_n$.)
We will show that the homomorphism $f: C_n \rightarrow H$ defined by 
$f(i) = u_{i,0}$ satisfies $\sigma_1(f) = 1$.

Consider the expressions $L_j, R_j, j = 0, \ldots, q-1$ in 
$\mathcal{G}_{\mathcal{V}_1}(H)$ given by
\begin{eqnarray*}
L_j & = & (u_{0,j},u_{1,j+1})(u_{2,j},u_{1,j+1})^{-1}
(u_{2,j},u_{3,j+1})(u_{4,j},u_{3,j+1})^{-1} \cdots (u_{n-1,j},u_{0,j+1}), \\
R_j & = & (u_{0,j+1},u_{1,j})^{-1}(u_{2,j+1},u_{1,j})
(u_{2,j+1},u_{3,j})^{-1}(u_{4,j},u_{3,j+1}) \cdots (u_{n-1,j},u_{0,j+1}).
\end{eqnarray*}
We then have 
$$L_j R_j = \prod_{i=0}^{n-1}(u_{2i,j},u_{2i+1,j+1})(u_{2i+2,j},u_{2i+1,j+1})^{-1}, 
j = 0, \ldots, q-1.$$ 
In particular, $L_{q-1} R_{q-1}$ simplifies to $1$, 
since  $u_{2i+1,q}$ is the constant image of the apex of $M_q(C_n)$. Also, by definition
of $\theta$, for $j = 1, \ldots, q-1$, we have
$$(u_{2i,j},u_{2i+1,j+1})(u_{2i+2,j},u_{2i+1,j+1})^{-1} 
= (u_{2i,j},u_{2i+1,j-1})(u_{2i+2,j},u_{2i+1,j-1})^{-1}$$
for $i = 0, \ldots, n-1$.
Therefore 
$$L_j R_j = \prod_{i=0}^{n-1}(u_{2i,j},u_{2i+1,j-1})(u_{2i+2,j},u_{2i+1,j-1})^{-1} 
j = 0, \ldots, q-1.$$ 
This is the image $\phi(R_{j-1} L_{j-1})$ of $R_{j-1} L_{j-1}$ under the 
(well-defined) automorphism $\phi$ of $\mathcal{G}_{\mathcal{V}_1}(H)$
which interchanges $(x,y)$ with $(x,y)^{-1}$ for all $(x,y) \in A(H)$.
Therefore if $L_j R_j = 1$, then $R_{j-1} L_{j-1} = 1$ and
$L_{j-1} R_{j-1} = 1$, since $L_{j-1} R_{j-1}$
and $R_{j-1} L_{j-1}$ are conjugates. Therefore, $L_j R_j= 1$ for all
$j = 0, \ldots, q-1$. Again by definition of $\theta$, we then have
$$\begin{array}{lcccr}
\sigma_1(f) & = & \prod_{i=0}^{n-1}(u_{2i,0},u_{2i+1,0})(u_{2i+2,0},u_{2i+1,0})^{-1} & & \\
& = & \prod_{i=0}^{n-1}(u_{2i,0},u_{2i+1,1})(u_{2i+2,0},u_{2i+1,1})^{-1} & = & L_0 R_0 = 1.
\end{array}
$$\end{proof}

It would be nice to be able to reverse the arguments of this proof
to prove the second direction. That is, start with 
$f: C_n \rightarrow H$ such that $\sigma_1(f) = 1$, and use 
the definition of $\theta$ to extend it to a homomorphism 
$g: M_q(C_n) \rightarrow H$. However an example in~\cite{STW} shows that
this is not always possible; $C_n$ may be to small as a base.

Thus we need to start from the basic information provided by the equation 
$\sigma_1(f) = 1$. Since $\mathcal{G}_{\mathcal{V}_1}(H) = 
\mathcal{F}_{\mathcal{V}}(A(H)) / \theta$ this means that in
$\mathcal{F}_{\mathcal{V}_1}$, $\sigma_1(f)$ is equal to a product
$\prod_{i=1}^{k} \gamma_i^{-1} \rho_i \gamma_i$ of conjugates
of the relations $\rho_i$ defining $\theta$.

The first difficulty here is that there is a useful feature common 
to $\sigma_1(f)$ and the generators $\rho_i$ of $\theta$, 
which is lost in the expression 
$\prod_{i=1}^{k} \gamma_i \rho_i \gamma_i^{-1}$: the fact that words alternate in
forward arcs and inverses of backward arcs along a walk in $H$.
This feature provides a natural connection between algebraic expressions and path
homomorphisms. For this reason, we will fix a root vertex $r$ in $H$,
 and associate to each arc $(x,y)$ a closed walk 
from $r$ through $(x,y)$, called a ``loop''. The use of loops will transform 
$\prod_{i=1}^{k} \gamma_i \rho_i \gamma_i^{-1}$ into a word which is much longer, 
but which has the desired alternating property. We will identify this word
with an expression of the type $L_j R_j$ defining homomorphic
images of two consecutive levels of some $M_q(C_{2m+1})$.

We will then extend this homomorphism to the apex of $M_q(C_{2m+1})$
using the definition of $\theta$, and towards its base using the
simplification of $\prod_{i=1}^{k} \gamma_i \rho_i \gamma_i^{-1}$ to $\sigma_1(f)$.
Our basic tool to convert algebraic simplifications to homomorphism
extensions is the extension along ``bricks'', that is, essentially
rectangular pieces that dissect  $M_q(C_{2m+1})$.

The last phase of the extension will be the connection to the
base of $M_q(C_{2m+1})$, which is equivalent to finding an extension
that is equal on two consecutive levels.
 
\subsection{Loops} Most of our work will be done
in the free monoid $(A(H) \cup A(H)^{-1})^*$ 
generated by $A(H) \cup A(H)^{-1}$, where 
$A(H)^{-1} = \{ (u,v)^{-1} : (u,v) \in A(H) \}$
is a set of symbols disjoint from $A(H)$.
Of course, $\mathcal{F}_{\mathcal{V}_1}(A(H)) = (A(H) \cup A(H)^{-1})^*/\iota$,
where $\iota$ is the congruence which identifies
$(u,v)(u,v)^{-1}$ and $(u,v)^{-1}(u,v)$ to $1$ for all $(u,v) \in A(H)$.
However, for a suitable correspondence between words and walks, it is
sometimes useful to avoid this identification.

A walk $u_0, u_1, \ldots, u_n$ in $H$ is the image of a homomorphism 
$f$ of some path with vertices $0, 1, \ldots n$ linked consecutively.
To such a walk we can associate a word 
$$\omega(f) = (u_0,u_1)(u_2,u_1)^{-1}(u_2,u_3)(u_4,u_3)^{-1} \cdots$$
ending in $(u_{n-1},u_n)$ or $(u_n,u_{n-1})^{-1}$ depending on whether
$n$ is odd or even. This word  alternates
symbols from $A(H)$ and symbols from $A(H)^{-1}$, with the symbol
following $(u_{2i},u_{2i+1})$ being $(u_{2i+2},u_{2i+1})^{-1}$ for some 
$u_{2i+2} \in V(H)$, and the symbol following $(u_{2i},u_{2i-1})^{-1}$ being 
$(u_{2i},u_{2i+1})$ for some $u_{2i+1} \in V(H)$.
Conversely, a word with these properties naturally corresponds to
a walk in $H$.

Now for our purposes we can assume that $H$ is connected and nonbipartite, if necessary by 
restricting our attention to the component of $H$ that contains an odd cycle with
trivial signature. We fix a root vertex $r$ and for every vertex $u$ of $H$,
we fix an even path $p_e(u)$ and an odd path $p_o(u)$, both from $r$ to $u$. 
For an arc $(u,v)$ of $H$, we define the loop $\boucle(u,v) \in (A(H) \cup A(H)^{-1})^*$
corresponding to $(u,v)$ by
$$\boucle(u,v) = \omega(p_e(u))\cdot(u,v)\cdot \omega(p_o(v))^{-1}.$$
Of course, $(a_1 a_2 \cdots a_n)^{-1}$ means $a_n^{-1} a_{n-1}^{-1}\cdots a_1^{-1}$,
though this needs to be stated formally since inversion does not exist in 
$(A(H) \cup A(H)^{-1})^*$. With this notation,
we define $\boucle((u,v)^{-1})$ as $(\boucle(u,v))^{-1}$.

The loop function naturally extends to $(A(H) \cup A(H)^{-1})^*$
by putting $\boucle(a_1 \cdots a_n) = \boucle(a_1) \cdots \boucle(a_n)$.
The map $\boucle: (A(H) \cup A(H)^{-1})^* \rightarrow (A(H) \cup A(H)^{-1})^*$
is an endomorphism whose image consists of words corresponding to
even closed walks rooted at $r$. 
$$$$

\subsection{Cycles with trivial signature}
Now let $C_{n'}$ be an odd cycle and $f: C_{n'} \rightarrow H$ 
a homomorphism such that $\sigma_1(f) = 1$.
For our purposes, it is useful to assume that for 
$i \in \mathbb{Z}_{n'} = V(C_{n'})$, we have $f(i-1) \neq f(i+1)$.
This can be done without loss of generality, since
if $f(i-1) = f(i+1)$, then we can remove $i$ and identify $i-1$ and $i+1$ 
to create a copy of $C_{n'-2}$ on
which $f$ induces a homomorphism $f': C_{n'-2} \rightarrow H$. We then have
$\sigma_1(f') = \sigma_1(f)$, since 
$\sigma_1(f')$ is obtained from $\sigma_1(f)$ by cancelling out 
$(f(i-1),f(i))$ with $(f(i+1),f(i))^{-1}$
and $(f(i),f(i-1))^{-1}$ with $(f(i),f(i+1))$
(unless $i = 0$, in which case we get a conjugation of the identity
element instead of the second cancellation).

We begin by lifting the condition $\sigma_1(f) = 1$ from $\mathcal{G}(H)$ to
$\mathcal{F}_{\mathcal{V}_1}(A(H))$: by definition, this means that
there exists a sequence 
$$\{ \rho_i = (a_i,b_i)(c_i,b_i)^{-1}(c_i,d_i)(a_i,d_i)^{-1} : i = 1, \ldots, k \}$$
of generators of $\theta$ and a sequence
$\gamma_1, \ldots, \gamma_k$ of conjugating elements such that 
\begin{equation} \label{sigequation}
\prod_{i=0}^{n'-1} (f(2i),f(2i+1))\cdot (f(2i+2),f(2i+1))^{-1}
= \prod_{i=1}^{k} \gamma_i \cdot \rho_i \cdot \gamma_i^{-1}.
\end{equation}
Note that the left side is reduced in $\mathcal{F}_{\mathcal{V}_1}(A(H))$, 
since $f(j-1) \neq f(j+1)$ for all $j \in \mathbb{Z}_{n'}$. This
means that the right side simplifies to the left, by repeatedly
cancelling out adjacent terms that are inverse of each other.

We view the terms in this equation as elements of $(A(H) \cup A(H)^{-1})^*$
(that is, as their simplest preimage under $\iota$).
In this way the terms $\boucle(\rho_i)$ and $\boucle(\gamma_i)$ are well defined.
Now consider the word
$$w' = \prod_{i=1}^{k} \boucle(\gamma_i)\cdot \boucle(\rho_i) \cdot \boucle(\gamma_i^{-1}).$$
Its length is a multiple of $4$, since for each $i$, the length
of $\boucle(\gamma_i)\cdot \boucle(\rho_i) \cdot \boucle(\gamma_i^{-1})$ is a multiple of $4$.
Therefore for any neighbour $s$ of $r$, the word 
$w = w' \cdot (r,s) \cdot (r,s)^{-1}$ has length equal to
$2n$ for some odd $n$. We then have $w = \omega(g)$ for some closed walk 
$g$. We identify the domain of $g$ with the $2n$-cycle $C_n \times K_2$, 
whose vertices correspond to two consecutive levels of some $M_q(C_n)$. 
We will show  such that it is possible to choose $q$ and the correspondence
such that $g$ extends to a homorphism of $M_q(C_n)$ to $H$.

 
\subsection{Bricks and simplifications}
For integers $i$ and $j$, the bricks 
$B_{\mbox{s}}(i,j)$ and $B_{\mbox{d}}(i,j)(i,j)$ are the 
graphs defined by
\begin{eqnarray*}
V(B_{\mbox{s}}(i,j)) & = & \{ (x,y) : 0 \leq x \leq i, 0 \leq y \leq j,
\mbox{ $x$ and $y$ have the same parity} \}, \\
E(B_{\mbox{s}}(i,j)) & = & \{ [(x,y),(x',y')] : |x - x'| = 1, |y - y'| = 1 \};\\
V(B_{\mbox{d}}(i,j)) & = & \{ (x,y) : 0 \leq x \leq i, 0 \leq y \leq j,
\mbox{ $x$ and $y$ have different parities} \}, \\
E(B_{\mbox{d}}(i,j)) & = & \{ [(x,y),(x',y')] : |x - x'| = 1, |y - y'| = 1 \};\\
\end{eqnarray*}
Thus, $B_{\mbox{s}}(i,j)$ and $B_{\mbox{d}}(i,j)$ are the two connected components of 
the categorical product
of paths of lengths $i$ and $j$.
The sets of vertices of $B_{\mbox{s}}(i,j)$ and $B_{\mbox{d}}(i,j)$ 
with second coordinate $0$ are called the 
{\em lower side} of $B_{\mbox{s}}(i,j)$ and $B_{\mbox{d}}(i,j)$, and similarly, 
their {\em upper}, {\em left}, and {\em right} sides are defined 
by obvious conditions. 

For any $i$, $B_{\mbox{s}}(2i,1)$ and $B_{\mbox{d}}(2i,1)$ are paths of length $2i$.
We use the following extension properties of their homomorphisms to $H$.
\begin{lemma} \label{extension}
Let $h: B_{\mbox{s}}(2i,1) \rightarrow V(H)$, $h': B_{\mbox{s}}(2i,1) \rightarrow V(H)$ 
be homomorphisms.
\begin{itemize}
\item[(i)] For any $j\geq 0$, the map 
$\hat{h}$ defined by $\hat{h}(x,y) = h(x,(y\mod 2))$ 
is a homomorphism of $B_{\mbox{s}}(2i,j)$ to $H$.
Similarly, for any $j\geq 0$, the map 
$\hat{h}'$ defined by $\hat{h}'(x,y) = h(x,1-(y\mod 2))$ 
is a homomorphism of $B_{\mbox{d}}(2i,j)$ to $H$.
\item[(ii)] If $h(x, x \mod 2) = h(2i - x, x \mod 2)$ for $x = 0, \ldots i$, then
there exists a homomorphism  
$\hat{h}: B_{\mbox{s}}(2i,2\lceil i/2 \rceil) \rightarrow H$
extending $h$ such that $\hat{h}$ is identically equal to $h(0)$ on the left, 
upper and right sides of $B_{\mbox{s}}(2i,2\lceil i/2 \rceil)$.
Similarly, if $h'(x, 1-(x \mod 2)) = h'(2i - x, 1-(x \mod 2))$ for $x = 0, \ldots i$,
then there exists a homomorphism  
$\hat{h}': B_{\mbox{d}}(2i,2\lfloor i/2 \rfloor + 1) \rightarrow H$
extending $h$ such that $\hat{h}$ is identically equal to $h(0,1)$ on the left, 
upper and right sides of $B_{\mbox{d}}(2i,2\lfloor i/2 \rfloor +1)$.
\end{itemize}
\end{lemma}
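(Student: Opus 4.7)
My plan is to dispose of (i) by a direct edge check, and then to handle (ii) by induction on $i$, reducing the palindromic walk by two indices at each step.

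For part~(i) I would simply verify that $\hat h$ respects adjacency. Any edge of $B_{\mbox{s}}(2i,j)$ joins $(x,y)$ to $(x',y')$ with $|x-x'|=|y-y'|=1$, and since $y\bmod 2$ and $y'\bmod 2$ then differ by $1$ as well, the pair $\bigl((x,y\bmod 2),(x',y'\bmod 2)\bigr)$ is an edge of $B_{\mbox{s}}(2i,1)$, whose image under $h$ is an edge of $H$ by hypothesis. The $B_{\mbox{d}}$ statement is the same computation with the parity of $y$ swapped.

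For part~(ii) in the $B_{\mbox{s}}$ case, I would set $a_j=h(j,j\bmod 2)$ so that the hypothesis reads as the palindromic condition $a_j=a_{2i-j}$, and induct on $i$, reducing by $2$ at each step. The base cases $i=0$ and $i=1$ admit direct constructions in $B_{\mbox{s}}(0,0)$ and $B_{\mbox{s}}(2,2)$. For $i\ge 2$, the subwalk $a_2,a_3,\ldots,a_{2i-2}$ is itself palindromic, so the induction hypothesis furnishes a homomorphism $\tilde h\colon B_{\mbox{s}}(2i-4,2\lceil(i-2)/2\rceil)\to H$ realising the subwalk on its bottom row and constant equal to $a_2$ on its left, top and right sides. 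I would transplant $\tilde h$ into $B_{\mbox{s}}(2i,2\lceil i/2\rceil)$ by the shift $(x,y)\mapsto(x+2,y)$; using the arithmetic identity $2\lceil i/2\rceil = 2\lceil (i-2)/2\rceil + 2$, the shifted copy of $\tilde h$ exactly occupies the sub-region $x\in[2,2i-2]$, $y\in[0,2\lceil i/2\rceil-2]$. The complement inside $B_{\mbox{s}}(2i,2\lceil i/2\rceil)$ consists of the four outer columns $x\in\{0,1,2i-1,2i\}$ together with the two topmost rows. I would assign $\hat h\equiv a_0$ to every complement vertex of even $y$-parity (the columns $x=0$ and $x=2i$, and the top row $y=2\lceil i/2\rceil$) and $\hat h\equiv a_1$ to every complement vertex of odd $y$-parity (the columns $x\in\{1,2i-1\}$ and the next-to-top row $y=2\lceil i/2\rceil-1$). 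Every edge of the complement, and every edge joining the complement to the shifted inner rectangle, then has endpoints in $\{a_0,a_1,a_2\}$ and is covered by the two consecutive walk edges $a_0a_1$ and $a_1a_2$ of $H$.

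The $B_{\mbox{d}}$ version of (ii) is the mirror-image construction: the palindromic condition now reads $h'(x,1-(x\bmod 2))=h'(2i-x,1-(x\bmod 2))$, the apex value is $h'(0,1)$, and the shift in the parity of the base row is exactly what accounts for the slightly different target height $2\lfloor i/2\rfloor+1$. The main obstacle I foresee is not conceptual but bookkeeping: one must check that the inductively supplied inner rectangle fits the claimed dimensions (the constant gap of $2$ between the heights $2\lceil i/2\rceil$ and $2\lceil(i-2)/2\rceil$ is the critical identity making this work), and then verify case by case that every edge of the complement, as well as every cross-edge between the complement and the inner rectangle, is mapped to one of the walk edges $a_0a_1$ or $a_1a_2$ of $H$.
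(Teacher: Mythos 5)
Your proposal is correct and, once unrolled, constructs exactly the map the paper uses: your concentric layers (outer columns $x\in\{0,1,2i-1,2i\}$ and top two rows getting $a_0$, $a_1$, then recursing) are precisely the level sets of the ``bus distance'' $\max\{|x-x'|,|y-y'|\}$ from the centre $(i,i\bmod 2)$, and the paper obtains the whole of part (ii) in one line by observing that this is the graph distance in a brick and sending each vertex to the point of the base path at the same distance from the centre (the palindrome hypothesis making that point well defined up to $h$-value). So this is essentially the same approach; the paper's metric formulation just absorbs all the boundary bookkeeping your induction has to carry explicitly.
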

\begin{proof} Item (i) is straightforward.
To prove the first part of item (ii) , we note that the distance between 
two vertices $(x,y)$, $(x',y')$ of $B_{\mbox{s}}(2i,j)$ is the ``bus distance'' 
$\max \{ |x-x'|, |y-y'| \}$.
Therefore the map $\hat{h}: B_{\mbox{s}}(2i,2\lceil i/2 \rceil) \rightarrow H$ defined by 
$\hat{h}(x,y) = h(x',y')$, where $(x',y') \in B_{\mbox{s}}(2i,1)$ is at the same distance as 
$(x,y)$ from $(i,(i\mod 2))$, is a homomorphism with the prescribed properties.
The second part is proved similarly. 
\end{proof}

\subsection{Extension to the apex}

We apply Lemma~\ref{extension} to extend the homomorphism
$g: C_{n}\times K_2 \rightarrow H$ towards the apex of some $M_q(C_n)$.
Consider the restrictions $g_1, \ldots, g_k$ of $g$ 
such that $\omega(g_i) = \boucle(\gamma_i)\cdot \boucle(\rho_i) \cdot \boucle(\gamma_i^{-1})$.
For a fixed $i$, we further decompose $g_i$ into five restrictions
$h_a, h_b, h_c, h_d, h'_a$ such that
\begin{eqnarray*}
\omega(h_a) & = & \boucle(\gamma_i)\cdot \omega(p_e(a_i)), \\
\omega(h_b) & = & \omega(p_o(b_i))^{-1} \circ \omega(p_o(b_i)), \\
\omega(h_c) & = & \omega(p_e(c_i))^{-1} \circ \omega(p_e(c_i)), \\
\omega(h_d) & = & \omega(p_o(d_i))^{-1} \circ \omega(p_o(d_i)), \\
\omega(h'_a) & = & \omega(p_e(a_i))^{-1} \cdot \boucle(\gamma_i).
\end{eqnarray*}

\begin{figure}[h]
\centering
\includegraphics{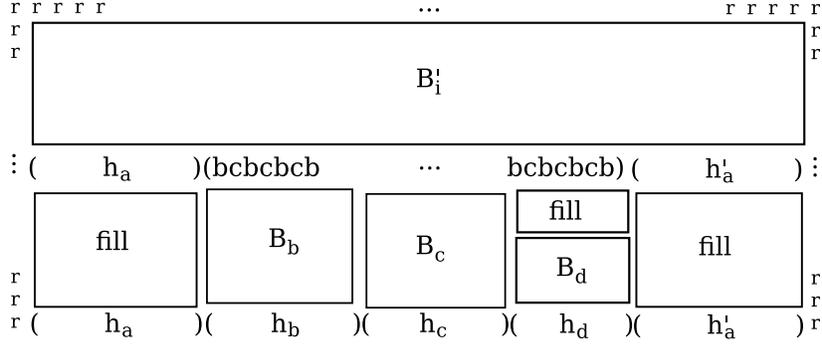}
\caption{Extension of $g_i$}  
\label{extensionfig}
\end{figure}

By Lemma~\ref{extension} (ii), $h_c$ extends to a homomorphism $\hat{h}_c$
of some $B_c = B_{\mbox{s}}(2\ell,\ell)$ to $H$ 
($\ell$ being the length of $p_e(c_i)$), which is identically equal 
to $c_i$ on its left, upper and right sides. 
Similarly $h_b$ and $h_d$ extend to $\hat{h}_b$ and $\hat{h}_d$, 
which are identically equal to $b_i$ and $d_i$ respectively on their 
left, upper and right sides. 
Moreover, the extensions $\hat{h}_b, \hat{h}_c, \hat{h}_d$
of $h_b, h_c, h_d$ to $B_b, B_c, B_d$ can be carried
simultaneously side by side, since $c_i$ is adjacent to $b_i$ and $d_i$.

We then add one level to $B_c$ and extend 
$\hat{h}_c$ by mapping the new level to $b_i$, and
add two levels to each of $B_b$ and $B_d$ and extend $\hat{h}_b$ 
and $\hat{h}_d$ by mapping
the new levels to $c_i$ and $b_i$ respectively.
We then use Lemma~\ref{extension} (i) on the top two levels of $Q_b, Q_c$ and $Q_d$
to add levels and equalize their heights if necessary. We can then
use Lemma~\ref{extension} (i) to extend $h_a$ and $h'_a$ upwards
to match the height of $B_b, B_c$ and $B_d$. 

We have thus extended $g_i$ to 
$\hat{g}_i: B_i \rightarrow H$ such that the restriction $g'_i$ of $\hat{g}_i$
to the top two levels of $Q_i$ satisfies 
$$\omega(g'_i) = \boucle(\gamma_i) \circ \omega(p_e(a_i)) 
\circ (a_i,b_i)((c_i,b_i)^{-1}(c_i,b_i))^{e_i}(a_i,b_i)^{-1}
\circ \omega(p_e(a_i))^{-1} \circ \boucle(\gamma_i)^{-1},$$
for some $e_i$ (see Figure~\ref{extensionfig}).  
Using Lemma~\ref{extension} (ii), we extend $g'_i$ 
to $\hat{g}'_i : B'_{i} \rightarrow H$ which is identically $r$
on the left, top and right sides of $B'_{i}$.

These extensions of $g_i$ to $\hat{g}_i$ and $\hat{g}'_i$ can be carried
simultaneously for $i = 1, \ldots, k$, since they all have the value
$r$ at their common boundaries. We can then use Lemma~\ref{extension} (i)
to equalize heights, and bring up the part of $g$ corresponding to
$(r,s)(r,s)^{-1}$ to the same height. Since the value of the extensions
is identically $r$ at the top level, we can identify all the vertices
of this top level. We have extended $g$ to the apex of some $M_q(C_n)$.

\subsection{Extension towards the base} The extension of $g$ towards the base
of $M_q(C_n)$ is a second extension independent from the extension to the apex.
It again uses Lemma~\ref{extension}, thus we keep the terminology of extending in the
``upper'' direction. The two extensions will afterwards be merged together
by identifying their bottom level.

In $\mathcal{F}_{\mathcal{V}_1}(A(H))$, the word
$\prod_{i=1}^{k} \gamma_i \cdot \rho_i \cdot \gamma_i^{-1}$
of $(A(H) \cup A(H)^{-1})^*$ simplifies to
$$\sigma_1(f) = \prod_{i=0}^{n'-1} (f(2i),f(2i+1))\cdot (f(2i+2),f(2i+1))^{-1}.$$ 
Specifically, this means that there exists a sequence of basic simplifications of 
$\prod_{i=1}^{k} \gamma_i \cdot \rho_i \cdot \gamma_i^{-1}$
eliminating everything but the terms of $\sigma_1(f)$.
In each of these basic simplification, some term $t_i = x$ will cancel out
either with the next term $t_{i+1} = x^{-1}$, or with a further term
$t_{i+2j} = x^{-1}$, where all the intermediate terms $t_{i+1}, \ldots, t_{i+2j-1}$
have been previously simplified. 

In $\prod_{i=1}^{k} \boucle(\gamma_i \cdot \rho_i \cdot \gamma_i^{-1})$, 
a term $t_i$ is replaced by $\boucle(t_i)$. 
Consider the homomorphism corresponding to the word 
$\boucle(t_i) \circ ((r,s)(r,s)^{-1})^{e_i} \circ \boucle(t_i^{-1})$ for
some $e_i$.
By Lemma~\ref{extension} (ii), it extends to a homomorphism
of some $B_i$ to $H$ with value identically $r$ on its left, 
upper and right sides. We can then add a level with value 
identically $s$, and use Lemma~\ref{extension} (i) to bring
up the remainder of the extension of $g$ to the same level.

In this way, we match each step in the simplification of
$\prod_{i=1}^{k} \gamma_i \cdot \rho_i \cdot \gamma_i^{-1}$
to a corresponding step in the extension. We end up with
an extension where the two upper levels correspond to a word
with the terms $\boucle(f(2i),f(2i+1))$ and $\boucle(f(2i+2),f(2i+1))^{-1}$ 
separated by terms of the form $((r,s)(r,s)^{-1})^{e}$.

Now in this word, between $(f(2i),f(2i-1))^{-1}$ and $(f(2i),f(2i+1))$
is a word of the form 
$\omega(p_e(f(2i))^{-1} \circ ((r,s)(r,s)^{-1})^{e} \circ \omega(p_e(f(2i))$.
By Lemma~\ref{extension} (ii), it extends to a homomorphism
of some brick to $H$ with value identically $f(2i)$ on its left, 
upper and right sides. We can then add a level with value 
identically $f(2i-1)$. 
Similarly between $(f(2i),f(2i+1))$ and $(f(2i+2),f(2i+1))^{-1}$
is a word of the form 
$\omega(p_o(f(2i+1))^{-1} \circ ((r,s)(r,s)^{-1})^{e} \circ \omega(p_o(f(2i+1))$.
By Lemma~\ref{extension} (ii), it extends to a homomorphism
of some brick to $H$ with value identically $f(2i+1)$ on its left, 
upper and right sides. We can then add a level with value 
identically $f(2i)$. 

These extensions can be carried out side by side
simultaneously, since $f(j)$ is adjacent to $f(j+1)$ for all $j$.
We use Lemma~\ref{extension} (i) to equalize the height. We now have
extended $g$ so that (after a suitable cyclic shift), the homomorphism 
on the top two levels corresponds to a word of the form
$$\begin{array}{l}
\prod_{i=0}^{n'-1} \left [ ((f(2i),f(2i+1))(f(2i),f(2i+1))^{-1})^{e_i}(f(2i),f(2i+1)) \right. \\
\left. \cdot ((f(2i+2),f(2i+1))^{-1} (f(2i+2),f(2i+1)))^{e_i'} (f(2i+2),f(2i+1))^{-1} \right ].
\end{array}
$$

\subsection{Connection to the base} Let $f': C_n \times K_2 \rightarrow H$
be the homomorphism corresponding to the top two levels of the second extension
of $g$. Note that $f'$ follows the original $f$ twice around its image,
with every arc traced back and forth many times. More precisely,
$f' = h \circ f$, where $h: C_n \times K_2 \rightarrow C_{n'}$
is a homomorphism. The only thing missing is to have $h(i,0) = h(i,1)$ 
for all $i$, that is, to have $f'$ equal on the two levels.
These levels could then be identified to form the base
of $M_q(C_{n})$. 

We label the vertices on $C_n \times K_2 = C_{2n}$ consecutively 
$u_0, \ldots, u_{2n-1}$ (with indices in $\mathbb{Z}_{2n}$). 
We label the edge $[u_i,u_{i+1}]$ with the sign $+$ (resp. $-$)
if for some $j \in \mathbb{Z}_{n'}$ we have $h(u_i) = j$ and $h(u_{i+1}) = j+1$
(resp $h(u_{i+1}) = j-1$). The word $\omega(f')$ above shows that
the number of edges with label $+$ is $n+n'$, and the number of
edges with label $-$ is $n-n'$.

Opposite signs on two consecutive edges $[u_i,u_{i+1}],[u_{i+1},u_{i+2}]$ 
happen precisely when $h(u_i) = h(u_{i+2})$.
We then have $h(u_{i+1}) \in \{h(u_i)-1,h(u_i)+1\}$. Substituting
one value for the other will interchange the signs of 
$[u_i,u_{i+1}]$ and $[u_{i+1},u_{i+2}]$. 
In terms of the extension of $g$, this corresponds to adding two 
levels to match the operation. That is, the extension on the two
new levels is identical to the extension on the previous two levels,
except that value at the vertex corresponding to $u_{i+1}$ switches 
from one term in $\{f(h(u_i)-1),f(h(u_i)+1)\}$ to the other. 

Proceeding this way, we can move the labels around in any way we please.
In particular we can rearrange the labels until there are $(n+n')/2$
``$+$'' labels followed by $(n-n')/2$ ``$-$'' labels, then by $(n+n')/2$ ``$+$'' labels 
and $(n-n')/2$ ``$-$'' labels. In this way, between any $u_i$ and $u_{i+n}$, the value
of $h$ moves forward $(n+n')/2$ times and backward $(n-n')/2$ times
so that it ends up $n'$ places forward in $C_{n'}$, exactly where it started.
Our second extension of $g$ is then equal on the top two levels, so that
these levels can be identified to form the base
of $M_q(C_{n})$. This concludes the proof of Theorem~\ref{coindth}.
\cqfd

\section{Proof of Theorem \ref{indth}} \label{prindth}

\subsection{Algorithmic considerations} 
To prove Theorem~\ref{indth}, we will show that the existence
of an odd cycle in $H$ with zero signature is incompatible
with the existence of a $\mathbb{Z}_2$-map of some
$\basu^m(\hoco{H})$ to $Q_1$. Note that $\mbox{Hom}$ and $\basu$ 
are both exponential constructions. We will first show that the
detection of an odd cycle with zero signature can be done efficiently
in terms of the size of $H$.

To each arc $(u,v)$ of $H$ we associate a variable $X_{u,v} \in \mathbb{Z}_2$.
We consider the system consting of the following equations.
\begin{itemize}
\item The {\em flow constraint} at a vertex
$u$ of $H$ is the equation 
$$\sum_{v \in N_H(u)} (X_{u,v} + X_{v,u}) = 0.$$
(Where $N_H(u)$ is the neighbourhood of $u$ in $H$.)
\item The {\em parity constraint} is the global condition
$$\sum_{(u,v) \in A(H)} X_{u,v} = 1.$$
\item The {\em signature constraint} is the equation
$$\sum_{(u,v) \in A(H)} \left ( (X_{u,v} - X_{v,u})\cdot (u,v) \right ) = 0.$$
\end{itemize}
The flow and parity constraints are equations in $\mathbb{Z}_2$.
The signature constraint is a single equation in $\mathcal{G}_{\mathcal{V}_2}(H)$.
The group $\mathcal{G}_{\mathcal{V}_2}(H)$ has a minimal generating set with 
no more than $|A(H)|$ elements.
Each arc $(u,v)$ can be expressed in terms of this generating set.
The signature constraint is then the set of constraints
corresponding to voiding the coefficient of each element of this generating set
in the expression $\sum_{(u,v) \in A(H)} (X_{u,v} - X_{v,u})\cdot (u,v)$.
Thus the system has no more than $|V(H)| + 1 + |A(H)|$ linear
equations in $|A(H)|$ variables.

If for some odd cycle $C_n$ there is a homomorphism $f: C_n \rightarrow H$
such that $\sigma_2(f) = 0$, then the system above has a solution, obtained by putting
$X_{u,v} = 1$ if there is an odd number of elements $i$ of $\mathbb{Z}_n$
such that $f(i) = u$ and $f(i+1) = v$, and $X_{u,v} = 0$ otherwise.
Conversely, for each solution of the system, the subdigraph of $H$
spanned by the arcs $(u,v)$ such that $X_{u,v} = 1$ is Eulerian,
though not necessarily connected. We can modify the solution by giving
the value $1$ to variables $X_{u,v}$, $X_{v,u}$ corresponding to opposite arcs,
to make the subdigraph connected. Indeed it is clear that this modification
does not alter the validity of the solution.
An Euler tour of the subdigraph then corresponds
to a homomorphism $f: C_n \rightarrow H$ with $n$ odd,
such that $\sigma_2(f) = 0$.
Thus, the existence of some odd cycle with zero signature 
can be detected in polynomial time.

\subsection{$\mathbb{Z}_2$-maps of crown}
Most of our work will involve groups of the form
$\mathbb{Z}_2^{P^{\underline{2}}}$, where $P$ is a poset.
Here, $\underline{2}$ is the poset with elements $0, 1$ such that $0 < 1$.
For a poset $P$, $P^{\underline{2}}$ is the set of all order-preserving maps
of $\underline{2}$ to $P$, including the constant maps.
We will represent an element of $P^{\underline{2}}$ by the comparability
$(x \leq y)$ it represents.
$\mathbb{Z}_2^{P^{\underline{2}}}$ is the 2-group generated by $P^{\underline{2}}$.
To an order preserving-map $f: P \rightarrow Q$, we naturally associate the 
group homomorphism 
$\hat{f}: \mathbb{Z}_2^{P^{\underline{2}}} \rightarrow \mathbb{Z}_2^{Q^{\underline{2}}}$
which extends the map between generators defined by $f$.

A {\em $\mathbb{Z}_2$-crown} is a $\mathbb{Z}_2$-poset $P$ with elements
$\pm 0, \ldots, \pm(2n-1)$ and the relations
$$0 < 1 > 2 < 3 > \cdots < 2n-1 > -0 < -1 > -2 < -3 > \cdots < -(2n-1) > 0.$$
The {\em strict order indicator} $\soi{P}$ on $P$ is the element of 
$\mathbb{Z}_2^{P^{\underline{2}}}$ with value $1$ on injective maps 
and $0$ on constant maps.
Our argument is partly based on the following result, which is
a simplicial statement of the fact that an antipodal continuous self-map
of the circle has ``odd degree''.
\begin{lemma} \label{odddegree}
Let $P$ be a $\mathbb{Z}_2$-crown and $f: P \rightarrow Q_1$
a $\mathbb{Z}_2$-map. Then $\hat{f}(\soi{P}) = \soi{Q_1}$. 
\end{lemma}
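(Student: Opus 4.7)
My plan is to rephrase the claim as a statement about a closed walk in the Hasse cycle of $Q_1$ and verify it edge-by-edge and vertex-by-vertex. Writing $v_0, v_1, \ldots, v_{4n-1}$ for the cyclic labelling of $P$ with $v_i$ minimal (respectively maximal) for $i$ even (resp.\ odd), set $w_i = f(v_i)$. Then $\mathbb{Z}_2$-equivariance of $f$ gives $w_{i+2n} = -w_i$, and order-preservation forces each pair $\{w_i, w_{i+1}\}$ to be either equal or a covering pair in $Q_1$; thus $w$ is a closed walk on the graph $C_4$ (the Hasse diagram of $Q_1$) with a loop at each vertex. Under this translation $\hat{f}(\soi{P})$ is the sum, over $i \in \mathbb{Z}_{4n}$, of the comparability carried by $\{w_i, w_{i+1}\}$. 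The identity $\hat{f}(\soi{P}) = \soi{Q_1}$ then breaks into two parity assertions: (a) each of the four covering pairs of $Q_1$ is realised an odd number of times by the walk, and (b) each of the four constants $(a \leq a)$ is realised an even number of times.

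For (a) I would lift to the universal cover of $C_4$, which is $\mathbb{Z}$; this is the simplicial version of the statement that antipodal self-maps of $S^1$ have odd degree. Choosing a lift $\tilde w : \mathbb{Z}_{4n} \to \mathbb{Z}$ with $|\tilde w(i+1) - \tilde w(i)| \leq 1$, the antipodal involution of $C_4$ lifts to translation by $2$, so $w_{i+2n} = -w_i$ together with continuity of the lift forces $\tilde w(i+2n) = \tilde w(i) + 2 + 4k$ for a single integer $k$. Iterating yields $\tilde w(4n) - \tilde w(0) = 4(1 + 2k)$, so the winding number is the odd integer $\ell = 1 + 2k$. Since the walk is closed, the oriented-edge chain it traces in $C_4$ is a $1$-cycle, and $H_1(C_4;\mathbb{Z}) = \mathbb{Z}$ is generated by the fundamental cycle; hence the signed traversal count of each edge equals $\ell$, and the unsigned count has the same parity and is therefore odd.

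Part (b) is where the crown structure really enters, and the parity argument there is the main obstacle. I would prove the stronger claim that every maximal cyclic run of constant value $w_j = w_{j+1} = \cdots = w_{j+\ell-1} = a$ in $w$ has odd length $\ell$. Because $P$ alternates between minimal and maximal vertices around its Hasse cycle, each transition step at the boundary of such a run imposes an order-preservation inequality that pins down the parity of the boundary index. If $a$ is maximal in $Q_1$, then at the exit step the required inequality $a \leq w_{j+\ell}$ can hold only when $w_{j+\ell} = a$, contradicting maximality of the run, unless $j+\ell-1$ is odd; a parallel argument at the entry forces $j$ to be odd, so $\ell$ is odd. The case $a$ minimal is symmetric, and the degenerate possibility of a length-$4n$ run is excluded because $w$ would then be constant, forcing $a = -a$ and violating fixed-point-freeness of the $\mathbb{Z}_2$-action on $Q_1$. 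Each such run contributes $\ell - 1$, an even number, to the count of stays at $a$; hence every constant coefficient in $\hat{f}(\soi{P})$ is even, and combined with (a) this yields $\hat{f}(\soi{P}) = \soi{Q_1}$.
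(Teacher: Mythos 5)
Your argument is correct, and it splits the identity into the same two parity claims as the paper: even multiplicity for the four constant comparabilities of $Q_1$ and odd multiplicity for the four strict ones. On the constants the two proofs coincide: the paper's observation that each connected component of $f^{-1}(a)$ starts and ends at a crown vertex whose parity is dictated by whether $a$ is minimal or maximal in $Q_1$ is exactly your statement that every maximal constant run has odd length, and both conclude that each run contributes an even number of loops. The genuine divergence is in the strict part. The paper normalises $f(0)=+0$, $f(-0)=-0$ and counts transitions between $+0$ and $-0$ along the positive zig-zag: the numbers of transitions in the two directions differ by one, so one antipodal pair of strict comparabilities is hit oddly often and the other evenly often on that half, and equivariance adds the mirror-image counts from the negative zig-zag to make all four odd. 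You instead lift the induced closed walk to the universal cover of the Hasse $4$-cycle, derive the odd winding number $1+2k$ from equivariance of the lift, and use the fact that the cycle space of $C_4$ is generated by the fundamental cycle to spread that odd number uniformly over the four edges. Your version is slightly more robust --- it needs no normalisation of $f(0)$ (the paper's ``without loss of generality'' tacitly requires first locating a minimal crown element whose image is minimal in $Q_1$) and it makes the advertised analogy with the odd degree of antipodal circle maps literal --- at the cost of invoking covering-space and homological language where the paper just counts.
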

\begin{proof}
Each connected component of $f^{-1}(+0)$ and of $f^{-1}(-0)$
starts and ends in an even number, and each connected component 
of $f^{-1}(+1)$ and of $f^{-1}(-1)$ starts and ends in an odd number.
Thus the coefficient of each of $(+0,+0)$, $(-0,-0)$, $(+1,+1)$, $(-1,-1)$
in $\hat{f}(\soi{P})$ is $0$. Now suppose without loss of generality
that $f(0) = 0$ and $f(-0) = -0$. Then on the ``positive zig-zag''
$0 < 1 > 2 < 3 > \cdots < 2n-1 > -0$, the image of $f$ will switch
from $+0$ to $-0$ some $n$ times, and switch back from $-0$
to $+0$ $n-1$ times. Every switch goes through $+1$ or $-1$,
so one of the pairs $(+0,+1), (-0,+1)$ or $(+0,-1), (-0,-1)$
of strict comparibilities is touched an odd number of times, 
and the other an even number of times. Since $f$ is a  $\mathbb{Z}_2$-map,
these numbers are reversed on the ``negative zig-zag''
$-0 < -1 > -2 < -3 > \cdots < -2n+1 > 0$, so that each
of the strict comparibilities $(+0,+1), (-0,+1),(+0,-1), (-0,-1)$
is touched an odd number of times. Thus $\hat{f}(\soi{P}) = \soi{Q_1}$.
\end{proof}

\subsection{Cycles with null signature} A graph
homomorphism $g: C_n \rightarrow H$ induces the $\mathbb{Z}_2$-map
$g': \hoco{C_n} \rightarrow \hoco{H}$ defined by $g'(A,B) = (g(A),g(B))$.
However, it will be useful to associate to $g$ a different map
$g^{+}: \hoco{C} \rightarrow \hoco{H}$ defined as follows.
\begin{itemize}
\item A minimal element $(\{i\}, \{j\})$ of $\hoco{C_n}$ correspond
to an arc $(i,j) = (i, i\pm 1)$ of $C$, and we put
$g^+(\{i\}, \{j\}) = (\{f(i)\}, \{f(j)\})$, its natural image
induced by $g$. 
\item For a maximal element of the form $(\{i\}, \{i-1,i+1\})$,
we put $g^+(\{i\}, \{i-1,i+1\}) = (\{g(i)\},N_H(g(i)))$. Similarly,
for a maximal element of the form $(\{i-1,i+1\}, \{i\})$,
we put $g^+(\{i-1,i+1\}, \{i\}) = (N_H(g(i)),\{g(i)\})$.
\end{itemize}
Thus if $f: \hoco{H} \rightarrow Q_1$
is a $\mathbb{Z}_2$-map, then $\widehat{f\circ g^+}(\soi{\hoco{C}}) = \soi{Q_1}$
by Lemma~\ref{odddegree}. The same holds with barycentric subdivisions:
$\basu^m(\hoco{C})$ is a $\mathbb{Z}_2$-crown,
and if $f: \basu^m(\hoco{H}) \rightarrow Q_1$
is a $\mathbb{Z}_2$-map, then 
$\widehat{f\circ \basu^m(g^+)}(\soi{\basu^m(\hoco{C})}) = \soi{Q_1}$.

Now, if $\sigma_2(g) = 0$, then in $\mathcal{F}_{\mathcal{V}_2}(A(H))$
we have 
\begin{equation}\label{sig2eq}
\sigma_2(g) = \sum_{i=0}^{n-1} 
\left [ (g(2i),g(2i+1)) + (g(2i+2),g(2i+1)) \right ]
= \sum_{j=1}^{k} \rho_j,
\end{equation}
where $\rho_1, \ldots, \rho_k$ are relations
of the form $\rho_j = (a_j,b_j)+(c_j,b_j)+(c_j,d_j)+(a_j,d_j)$
defining the congruence $\theta$ on $\mathcal{F}_{\mathcal{V}_2}(A(H))$.
To $\rho = (a,b)+(c,b)+(c,d)+(a,d)$, we associate the subposet
$\rho^+$ of $\hoco{H}$ induced by the the set $\{
(\{a\},N_H(a))$, $(N_H(b),\{b\})$, $(\{c\},N_H(c))$, $(N_H(d),\{d\})$, 
$(\{a\},\{b\})$, $(\{c\},\{b\})$, $(\{c\},\{d\})$, $(\{a\},\{d\})\}$.
(See Figure~\ref{rhoplus}.)

Our next Lemma adapts Equation~(\ref{sig2eq}) to the groups
$\mathbb{Z}_2^{\hoco{H}^{\underline{2}}}$ and 
$\mathbb{Z}_2^{\hoco{\basu^m(H)}^{\underline{2}}}$ for all $m \geq 1$.
Recall that $g^+: \hoco{C_n} \rightarrow \hoco{H}$ induces
$\widehat{g^+}:\mathbb{Z}_2^{\hoco{C_n}^{\underline{2}}} \rightarrow
\mathbb{Z}_2^{\hoco{H}^{\underline{2}}}$; $g^+$
also induces $\basu^m(g^+): \basu^{m}(\hoco{C_n}) \rightarrow 
\basu^m(\hoco{H})$ for all $m \geq 1$, which in turn induce
$\widehat{\basu^m(g^+)}: \mathbb{Z}_2^{\basu^m(\hoco{C_n})^{\underline{2}}} 
\rightarrow \mathbb{Z}_2^{\basu^m(\hoco{H})^{\underline{2}}}$.
\begin{lemma} \label{sigmasum}
$$\widehat{g^+}(\soi{\hoco{C_n}}) = \sum_{j=1}^{k} \soi{\rho_j^+},$$
hence
$$\widehat{\basu^m(g)}(\soi{\basu^m(\hoco{C_n})}) = \sum_{j=1}^{k} \soi{\basu^m(\rho_j^+)}$$
for all $m \geq 1$.
\end{lemma}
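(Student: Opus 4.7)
My plan is to reduce both sides of the asserted identity to a single sum indexed by the directed arcs of $H$, whose coefficients can be read off respectively from the expressions $\sigma_2(g)$ and $\sum_j\rho_j$ inside $\mathcal{F}_{\mathcal{V}_2}(A(H))$; the identity then follows from Equation~(\ref{sig2eq}).

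For the case $m=0$, I would first observe that for $n$ odd, $\hoco{C_n}$ is a bipartite $\mathbb{Z}_2$-crown of height one: its minimal elements are the $2n$ pairs $(\{i\},\{j\})$ corresponding to directed arcs $(i,j)\in A(C_n)$, its maximal elements are the $2n$ pairs $(\{i\},N_{C_n}(i))$ and $(N_{C_n}(i),\{i\})$, and every strict comparability has the form $(\{i\},\{j\})<M$ with $M\in\{(\{i\},N_{C_n}(i)),(N_{C_n}(j),\{j\})\}$. For each $(u,v)\in A(H)$ I define the local contribution
$$\tau(u,v) = [(\{u\},\{v\})<(\{u\},N_H(u))]+[(\{u\},\{v\})<(N_H(v),\{v\})]\in \mathbb{Z}_2^{\hoco{H}^{\underline{2}}}.$$
By the definition of $g^+$ on minimal and maximal elements, the two strict comparabilities of $\hoco{C_n}$ involving the arc $(i,j)$ map under $\widehat{g^+}$ precisely to the two summands of $\tau(g(i),g(j))$. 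Regrouping by arcs of $H$ gives
$$\widehat{g^+}(\soi{\hoco{C_n}}) = \sum_{(u,v)\in A(H)} c_{u,v}\,\tau(u,v),$$
where $c_{u,v}\in\mathbb{Z}_2$ is the parity of the number of arcs $(i,j)\in A(C_n)$ with $g(i)=u$ and $g(j)=v$; comparing with the definition of $\sigma_2(g)$ shows that $c_{u,v}$ is precisely the coefficient of $(u,v)$ in $\sigma_2(g)$. Similarly, $\rho_j^+$ is a height-one $\mathbb{Z}_2$-crown on eight elements whose four minimals are the four arcs of the sum defining $\rho_j$, and its eight strict comparabilities are exactly the summands of $\tau(a_j,b_j)+\tau(c_j,b_j)+\tau(c_j,d_j)+\tau(a_j,d_j)$, so
$$\sum_{j=1}^{k}\soi{\rho_j^+} = \sum_{(u,v)\in A(H)} c'_{u,v}\,\tau(u,v),$$
where $c'_{u,v}$ is the coefficient of $(u,v)$ in $\sum_j\rho_j$. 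Equation~(\ref{sig2eq}) forces $c_{u,v}=c'_{u,v}$ for every arc, proving the first identity.

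For the barycentric identity the same template applies once the correct local contribution is identified. Chains in a height-one bipartite poset have size at most two, so $\basu^m(\hoco{C_n})$ and $\basu^m(\rho_j^+)$ remain bipartite $\mathbb{Z}_2$-crowns, with $\basu$ simply doubling the length of the crown at each step. Hence every strict comparability in $\basu^m(\hoco{C_n})$ has the form $\{x\}\subsetneq\{m^{\ast},M^{\ast}\}$, where $m^{\ast}$ is the unique minimal-side element of the $2$-chain in $\basu^{m-1}(\hoco{C_n})$. Iterating this descent through all barycentric levels, every strict comparability of $\basu^m(\hoco{C_n})$ is canonically attached to a unique directed arc of $C_n$, and its image under $\basu^m(g^+)$ depends only on the image of that arc in $A(H)$. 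Defining $\tau_m(u,v)\in\mathbb{Z}_2^{\basu^m(\hoco{H})^{\underline{2}}}$ as the sum of the $2^{m+1}$ such images attached to any single arc mapping to $(u,v)$, the same arc-counting yields
$$\widehat{\basu^m(g^+)}(\soi{\basu^m(\hoco{C_n})}) = \sum_{(u,v)\in A(H)} c_{u,v}\,\tau_m(u,v)$$
and $\sum_j\soi{\basu^m(\rho_j^+)}=\sum_{(u,v)} c'_{u,v}\,\tau_m(u,v)$, and Equation~(\ref{sig2eq}) closes the argument. The main technical obstacle is precisely this inductive attachment of strict comparabilities to arcs and the verification that $\tau_m(u,v)$ is independent of the choice of arc of $C_n$ mapping to $(u,v)$; this is an induction on $m$ using the bipartite $\mathbb{Z}_2$-crown structure of the iterated barycentric subdivisions.
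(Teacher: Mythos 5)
Your proof is correct and follows essentially the same route as the paper: your local contribution $\tau(u,v)$ is exactly the strict order indicator $\soi{V(u,v)}$ of the three-element subposet $(N_H(v),\{v\}) > (\{u\},\{v\}) < (\{u\},N_H(u))$ used in the paper, and both arguments reduce the two sides to arc-indexed sums $\sum_{(u,v)} c_{u,v}\,\soi{V(u,v)}$ whose coefficients agree by Equation~(\ref{sig2eq}). The only cosmetic difference is that for the subdivided statement the paper simply replaces $V(u,v)$ by $\basu^m(V(u,v))$, whereas you track individual comparabilities through the subdivision; the two bookkeeping schemes coincide.
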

\begin{proof}
Let $O_g$ be the set of arcs of $H$ which appear
an odd number of times as $(g(i),g(i+1))$ or $(g(i+1),g(i))$
for some $i \in \mathbb{Z}_n$. Thus in $\mathcal{F}_{\mathcal{V}_2}(A(H))$,
we have $\sigma_2(g) = \sum_{(u,v) \in O_g} (u,v)$.
By Equation~\ref{sig2eq}, $O_g$ coincides with the set of
arcs which appear an odd number of times as terms
in $\rho_1, \ldots, \rho_k$.

Now for $(u,v) \in A(H)$, let $V(u,v)$ be the subposet 
$$(N_H(v),\{v\}) > (\{u\},\{v\}) < (\{u\},N_H(u))$$ 
of $\hoco{H}$. Then 
$$\widehat{g^+}(\soi{\hoco{C_n}}) = \sum_{(u,v) \in O_g} \soi{V(u,v)},$$
and similarly for all
$$\widehat{\basu^m(g)}(\soi{\basu^m(\hoco{C})})
 = \sum_{(u,v) \in O_g} \soi{\basu^m(V(u,v))}$$
for all $m \geq 1$.
Equation~\ref{sig2eq} then implies that 
$$\sum_{(u,v) \in O_g} \soi{V(u,v)} = \sum_{j=1}^{k} \soi{\rho_j^+}$$
and
$$\sum_{(u,v) \in O_g} \soi{\basu^m(V(u,v))}
= \sum_{j=1}^{k} \soi{\basu^m(\rho_j^+)}$$
for all $m \geq 1$.
\end{proof}
We will next see that the conclusion 
$$\widehat{\basu^m(g)}(\soi{\basu^m(\hoco{C})}) = \sum_{j=1}^{k} \soi{\basu^m(\rho_j^+)}$$
of Lemma~\ref{sigmasum} is incompatible with the conclusion
$$\widehat{\basu^m(g)}(\soi{\basu^m(\hoco{C})})
= \soi{Q_1}$$ of Lemma~\ref{odddegree}.

\subsection{Domination and dismantlability}
In a poset $Q$, an element $p$ is said to be {\em dominated} by an element $q$
if $p$ is comparable to $q$ and every element comparable to $p$ is
comparable in the same way to $q$. $Q$ is said to be {\em dismantlable}
if it can be reduced to a single point by recursively removing dominated elements.

\begin{figure}[h]
\centering
\includegraphics{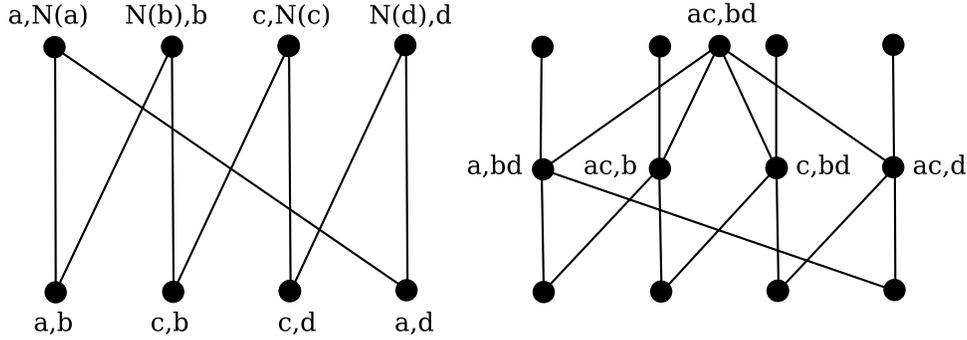}
\caption{$\rho^+$ and $D(\rho^+)$}  
\label{rhoplus}
\end{figure}

For instance, consider the subposet $D(\rho^+)$ of $\hoco{H}$ obtained by adding
the elements $(\{a\},\{b,d\})$, $(\{a,c\},\{b\})$, $(\{c\},\{b,d\})$,
$(\{a,c\},\{d\})$ and $(\{a,c\},\{b,d\})$ to $\rho^+$. (See Figure~\ref{rhoplus}.)
In $D(\rho^+)$, $(\{a\},N_H(a))$, $(N_H(b),\{b\})$, $(\{c\},N_H(c))$
and $(N_H(d),\{d\})$ are dominated respectively by
$(\{a\},\{b,d\})$, $(\{a,c\},\{b\})$, $(\{c\},\{b,d\})$ and
$(\{a,c\},\{d\})$. Removing these dominated elements leaves
$(\{a,c\},\{b,d\})$ as the unique maximum. We then have 
$(\{a\},\{b,d\})$, $(\{a,c\},\{b\})$, $(\{c\},\{b,d\})$ and $(\{a,c\},\{d\})$
dominated by $(\{a,c\},\{b,d\})$, and removing these leaves
$(\{a\},\{b\})$, $(\{c\},\{b\})$, $(\{c\},\{b\})$ and $(\{c\},\{d\})$
dominated by $(\{a,c\},\{b,d\})$. Hence $D(\rho^+)$
is dismantlable.

\begin{lemma} \label{diszero}
Let $P$ be a $\mathbb{Z}_2$-crown, $Q$ a dismantlable poset
and $f: P \rightarrow Q_1$ an order-preserving map which 
factors through $Q$,
that is, $f = h \circ g$ where $g : P \rightarrow Q$,
$h: Q \rightarrow Q_1$ are order-preserving.
Then $\hat{f}(\soi{P}) = 0$.
\end{lemma}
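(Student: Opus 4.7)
The plan is to induct on $|Q|$. In the base case $|Q|=1$, the map $f$ is constant with some value $\ast\in Q_1$, so $\hat{f}(\soi{P})$ is a sum of $4n$ copies of the single generator $(\ast,\ast)$---one for each strict comparability in the cyclic Hasse diagram of the $\mathbb{Z}_2$-crown $P$---and hence vanishes in $\mathbb{Z}_2^{Q_1^{\underline{2}}}$.

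For the inductive step I pick a dominated element $q_0\in Q$ with dominator $q_1$ and, without loss of generality, assume $q_0<q_1$; the case $q_0>q_1$ is symmetric. The retraction $r\colon Q\to Q$ with $r(q_0)=q_1$ and $r|_{Q\setminus\{q_0\}}=\mathrm{id}$ is order-preserving by the domination hypothesis. Setting $g'=r\circ g$ and $h'=h|_{Q\setminus\{q_0\}}$, the inductive hypothesis applied to $h'\circ g'$ gives $\widehat{h'\circ g'}(\soi{P})=0$. Since $h\circ r\circ g=h'\circ g'$ as maps $P\to Q_1$, it suffices to show $\hat{h}(D)=0$ in $\mathbb{Z}_2^{Q_1^{\underline{2}}}$, where $D=\hat{g}(\soi{P})+\widehat{r\circ g}(\soi{P})$.

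The key step is a direct analysis of $D$. Only edges of the Hasse diagram of $P$ incident to $X=g^{-1}(q_0)$ contribute: an internal edge of $X$ contributes $(q_0,q_0)+(q_1,q_1)$; a boundary edge whose $X$-end is a minimum (``low'') vertex of $P$ contributes $(q_0,a)+(q_1,a)$, with $a>q_0$ and hence $a\geq q_1$ by domination; and a boundary edge whose $X$-end is a maximum (``high'') vertex of $P$ contributes the symmetric $(a,q_0)+(a,q_1)$ with $a<q_0$. Writing $y=h(q_0)$ and $z=h(q_1)$, so $y\leq z$ in $Q_1$, the case $y=z$ is trivial: every contribution pairs with a duplicate of itself and $\hat{h}(D)=0$.

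The main obstacle is the case $y<z$. Since $Q_1$ has height one, $y$ must then be a minimum and $z$ a maximum of $Q_1$, so the maximality of $z$ forces $h(a)=z$ for every $a\geq q_1$, and dually $h(a)=y$ for every $a<q_0$. The boundary contributions become $(y,z)+(z,z)$ and $(y,y)+(y,z)$ respectively, and collecting in $\hat{h}(D)$ gives coefficient $I+k_{\mathrm{tot}}'$ for $(y,y)$, coefficient $I+k_{\mathrm{tot}}$ for $(z,z)$, and coefficient $2k$ for $(y,z)$, where $I$ is the number of internal edges of $X$, $k$ the number of runs of $X$ in the cycle $P$, and $k_{\mathrm{tot}}$ (resp.\ $k_{\mathrm{tot}}'$) the number of boundary edges with low (resp.\ high) $X$-end. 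The resolution is a bipartite double count on the cyclic Hasse diagram: every low vertex of $X$ has exactly two high neighbours in $P$, so summing over $X_L$ yields $I+k_{\mathrm{tot}}=2|X_L|$, and symmetrically $I+k_{\mathrm{tot}}'=2|X_H|$; all three coefficients are therefore even, $\hat{h}(D)=0$, and the induction closes.
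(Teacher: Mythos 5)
Your proof is correct. It shares the paper's overall skeleton --- induction on $|Q|$, removal of a dominated element $q_0$ via the retraction $r$ sending $q_0$ to its dominator $q_1$, and reduction to showing $\hat{f}(\soi{P}) = \widehat{h\circ r\circ g}(\soi{P})$ --- but you establish that key equality by a genuinely different computation. The paper interpolates an intermediate map $g'$ (first redirecting the maximal elements of $g^{-1}(q_0)$ to $q_1$, then the minimal ones) and observes that each individual redirection alters exactly two comparabilities of the crown, both in the same way, so the image of $\soi{P}$ is unchanged modulo $2$ at every stage; no global counting is needed. You instead form the total difference $D=\hat{g}(\soi{P})+\widehat{r\circ g}(\soi{P})$, push it to $\mathbb{Z}_2^{Q_1^{\underline{2}}}$, and kill each basis coefficient by a double count over the cyclic Hasse diagram ($I+k_{\mathrm{tot}}=2|X_L|$, $I+k'_{\mathrm{tot}}=2|X_H|$, and $2k$ boundary edges for the coefficient of $(y,z)$); your observation that domination forces $a\geq q_1$ (resp.\ $a<q_0$) on the boundary edges, and that minimality/maximality in $Q_1$ then pins down $h(a)$, is exactly what makes the count close. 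Your version is more computational but makes the parity mechanism fully explicit; the paper's two-stage local cancellation is shorter and avoids classifying edges by type. Both arguments share the same implicit step, which the paper also takes for granted: applying the induction hypothesis to $r(Q)=Q\setminus\{q_0\}$ assumes that deleting an arbitrary dominated element of a dismantlable poset leaves a dismantlable poset.
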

\begin{proof} The result is clear if $Q$ is a single point. Thus
we can proceed by induction on the number of elements in $Q$.
Let $p$ be dominated by $q$ in $Q$; we will suppose that $p < q$
(the other case being symmetric). Let $r: Q \rightarrow Q$ be the
retraction which maps $p$ to $q$ and fixes everything else.
Then $f' = h \circ r \circ g : P \rightarrow Q_1$ factors through
the dismantlable poset $r(Q)$ which has one element less than $Q$,
so by the induction hypothesis, $\widehat{f'}(\soi{P}) = 0$.
Thus if $f = f'$, then $\hat{f}(\soi{P}) = 0$. We can therefore suppose
that $f \neq f'$. This means that $h(p) \neq h(q)$. We will suppose
without loss of generality that $h(p) = 0$ and $h(q) = 1$. 

Let $g': P \rightarrow Q_1$ be the map obtained from $g$ by changing 
the image of every maximal element $x$ such that $g(x) = p$
from $p$ to $q$. For each such $x$, there are two minimal elements 
$y, z$ of $P$ which are below $x$. We then have $f(y), f(z) \leq f(x) = 0$, 
so that $f(y) = f(z) = 0$. Therefore the two compararabilities
$(y \leq x)$, $(z \leq x)$ are mapped to $(0 \leq 0)$ by $\hat{f}$ 
and to $(0 \leq 1)$ by $\widehat{h \circ g'}$. 
Thus $\widehat{h \circ g'}(\soi{P}) = \hat{f}(\soi{P})$.

Now $r \circ g$ is obtained from $g'$ by changing 
the image of every minimal element $x$ such that $g'(x) = p$
from $0$ to $1$. For each such $x$, there are two maximal elements 
$y, z$ of $P$ which are above $x$. Now  since $q$ dominates $p$,
we have $g'(y), g'(z) \geq q$ hence $h \circ g'(y) = h \circ g'(z) = 1$.
Therefore the two compararabilities
$(x \leq y)$, $(x \leq z)$ are mapped to $(0 \leq 1)$ by $\widehat{h \circ g'}$
and to $(1 \leq 1)$ by $\widehat{h \circ r \circ g} = \widehat{f'}$.
Thus $\widehat{f'}(\soi{P}) = \widehat{h \circ g'}(\soi{P}) = \hat{f}(\soi{P})$.
Therefore $\widehat{f'}(\soi{P}) = 0$ implies $\hat{f}(\soi{P}) = 0$. 
\end{proof}

\begin{lemma} \label{disbasu} If $Q$ is a dismantlable poset, then 
for any $m$, $\basu^m(Q)$ is dismantlable.
\end{lemma}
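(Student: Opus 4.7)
Since $\basu^m(Q) = \basu(\basu^{m-1}(Q))$, a straightforward induction on $m$ reduces the lemma to the case $m = 1$: it suffices to show that if $Q$ is dismantlable, then $\basu(Q)$ is dismantlable. I would prove this by induction on $|Q|$. The base case $|Q| = 1$ is trivial, since $\basu(Q)$ is then a single point. For the inductive step, dismantlability of $Q$ provides an element $p \in Q$ dominated by some distinct $q \in Q$, and $Q' := Q \setminus \{p\}$ is itself dismantlable; by the inductive hypothesis, $\basu(Q')$ is dismantlable. Since $\basu(Q') \subseteq \basu(Q)$ is exactly the subposet of chains not containing $p$, it is enough to show that $\basu(Q)$ can be dismantled down to $\basu(Q')$ by removing the chains containing $p$ one at a time, each dominated at the moment of removal.

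Assume without loss of generality that $p < q$ (the case $p > q$ is symmetric). The key observation is that domination of $p$ by $q$ forces $C \cup \{q\}$ to be a chain whenever $C$ is a chain of $Q$ containing $p$: every other element of $C$ is comparable to $p$, hence comparable in the same way to $q$. Partition the chains of $Q$ containing $p$ into \emph{Phase 1} chains (containing $p$ but not $q$) and \emph{Phase 2} chains (containing both $p$ and $q$). For a Phase 1 chain $C$, the chain $C \cup \{q\}$ lies in Phase 2; for a Phase 2 chain $C$, the chain $C \setminus \{p\}$ lies in $\basu(Q')$.

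I would dismantle in the following order: first remove the Phase 1 chains in order of \emph{decreasing} size, each dominated by $C \cup \{q\}$; then remove the Phase 2 chains in order of \emph{increasing} size, each dominated by $C \setminus \{p\}$. The verifications are routine. For a Phase 1 chain $C$ being removed, any surviving $D \neq C$ comparable to $C$ splits into cases: if $D \supsetneq C$ then $p \in D$, and either $q \in D$ (so $D \supseteq C \cup \{q\}$) or $D$ is a strictly larger Phase 1 chain already removed; if $D \subsetneq C$ then $D \subsetneq C \cup \{q\}$ trivially. For a Phase 2 chain $C$ being removed, any $D \supsetneq C$ satisfies $D \supseteq C \supseteq C \setminus \{p\}$; any $D \subsetneq C$ not containing $p$ is trivially contained in $C \setminus \{p\}$; and any $D \subsetneq C$ containing $p$ is either a Phase 1 chain (already removed) or a strictly smaller Phase 2 chain (already removed in the current phase).

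The main subtlety I anticipate is identifying the correct processing order: the natural dominator for a Phase 1 chain is strictly \emph{larger} than $C$ (namely $C \cup \{q\}$), while the natural dominator for a Phase 2 chain is strictly \emph{smaller} than $C$ (namely $C \setminus \{p\}$). Consequently the two phases must be processed in opposite size-orders, so that no chain comparable to $C$ survives to obstruct the required domination. Once this asymmetry is recognized, the verifications reduce to the elementary case analyses above, and the lemma follows by the two inductions described at the start.
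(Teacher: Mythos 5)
Your proposal is correct and follows essentially the same route as the paper: reduce to $m=1$, induct on $|Q|$, and dismantle $\basu(Q)$ down to $\basu(Q\setminus\{p\})$ in two phases, first removing chains containing $p$ but not $q$ (each dominated by $C\cup\{q\}$, processed from largest to smallest) and then removing chains containing both (each dominated by $C\setminus\{p\}$, processed from smallest to largest). The ordering subtlety you flag is exactly the point the paper handles by removing maximal, then minimal, elements in the two respective stages.
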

\begin{proof} It suffices to show that if $Q$ is dismantlable,
then $\basu(Q)$ is dismantlable. We will again proceed by induction 
on the number of elements in $Q$, the result being clear if
$Q$ is a single point. 
Let $p$ be dominated by $q$ in $Q$. We will show that
$\basu(Q)$ dismantles to $\basu(Q \setminus \{p\})$.
The elements of $\basu(Q)$ are chains in $Q$,
and since $q$ dominates $p$, for every element $C$
of $\basu(Q)$ containing $p$, $C \cup \{q\}$ is an
element of $\basu(Q)$. Let $m$ be the number of elements
of $\basu(Q)$ which contain $p$ but not $q$.
We construct
a sequence $\basu(Q) = R_0, R_1, \ldots, R_m$ of subposets of $\basu(Q)$, 
where $R_i$ is obtained from $R_{i-1}$ by removing a maximal element $C_i$
of $R_{i-1}$ which contains $p$ but not $q$. Since $C_i$ is dominated by
$C_i \cup \{q\}$ in $R_{i-1}$, the sequence is a dismantling of
$\basu(Q)$ to its subposet $R_m$ which consists of all the elements
which contain $q$ whenever they contain $p$.
Let $R_m, R_{m+1}, \ldots, R_{2m}$ be a sequence of subposets of 
$R_m$, where $R_i$ is obtained from $R_{i-1}$ by removing a minimal element $C_i$
of $R_{i-1}$ which contains $p$. Since $C_i$ is dominated by
$C_i \setminus \{p\}$ in $R_{i-1}$, the sequence is a dismantling of
$R_m$ to its subposet $R_{2m} = \basu(Q \setminus \{p\})$.
Thus $\basu(Q)$ dismantles to $\basu(Q \setminus \{p\})$. Therefore
if $\basu(Q \setminus \{p\})$ is dismantlable, then so is $\basu(Q)$.
\end{proof}

\begin{proof}[Proof of Theorem~\ref{indth}]
Suppose that for some odd $n$ there exists a homomorphism 
$f: C_n \rightarrow H$ such that $\sigma_2(g) = 0$. 
Then there exists a sequence $\rho_1, \ldots, \rho_k$ of generators of
$\theta$ such that $\sigma_2(f) = \sum_{j=1}^{k} \rho_j$
in $\mathcal{F}_{\mathcal{V}_2}(A(H))$.
By Lemma~\ref{sigmasum}, we then have 
$$\widehat{\basu^m(f^+)}(\soi{\basu^m(\hoco{C_n})}) = \sum_{j=1}^{k} \soi{\basu^m(\rho_j^+)}$$
for all $m \geq 0$. Now for $j = 1, \ldots, k$, $\rho_j^+$ is contained in $D(\rho_j^+)$
which is dismantlable, hence $\basu^m(D(\rho_j^+))$ is dismantlable for all $m$
by Lemma~\ref{disbasu}. Hence by Lemma~\ref{disbasu},
for any order-preserving map 
$g: \basu^m(\hoco{H}) \rightarrow Q_1$, we have 
$\hat{g}(\soi{\basu^m(\rho_j^+)}) = 0$ for
$j = 1, \ldots, k$. Therefore
$$\widehat{g \circ \basu^m(f^+)}(\soi{\basu^m(\hoco{C_n})}) 
= \sum_{j=1}^{k} \hat{g}(\soi{\basu^m(\rho_j^+)}) = 0.$$
By Lemma~\ref{odddegree}, 
$g \circ \basu^m(f^+): \basu^m(\hoco{C_n}) \rightarrow Q_1$
cannot be a $\mathbb{Z}_2$-map. Since 
$\basu^m(f^+): \basu^m(\hoco{C_n}) \rightarrow \basu^m(\hoco{H})$
is a  $\mathbb{Z}_2$-map, we conclude that there does not exist
a $\mathbb{Z}_2$-map $g: \basu^m(\hoco{H}) \rightarrow Q_1$.
Therefore $\ind(\hoco{H}) \geq 2$.
\end{proof}

\end{document}